\newcommand{\M}{\mathcal{M}}
\newcommand{\Gg}{\mathfrak{G}}
\newcommand{\CC}{{\mathbb{C}}}
\newcommand{\NN}{{\mathbb{N}}}
\newcommand{\As}{{\mathscr{A}}}\newcommand{\Bs}{{\mathscr{B}}}\newcommand{\Cs}{{\mathscr{C}}}
\newcommand{\Is}{{\mathscr{I}}}\newcommand{\Ks}{{\mathscr{K}}}
\newcommand{\Ms}{{\mathscr{M}}}
\newcommand{\Rs}{{\mathscr{R}}}
\DeclareFontFamily{U}{rsfs}{\skewchar\font127 }
\DeclareFontShape{U}{rsfs}{m}{n}{%
   <5> <6> rsfs5
   <7> rsfs7
   <8> <9> <10> <10.95> <12> <14.4> <17.28> <20.74> <24.88> rsfs10
}{}
\DeclareSymbolFont{rsfs}{U}{rsfs}{m}{n}
\DeclareSymbolFontAlphabet{\scr}{rsfs}
\newcommand{\Cf}{\scr{C}}\newcommand{\Df}{\scr{D}}
\DeclareMathOperator{\spa}{span}
\DeclareMathOperator{\Ob}{Ob}
\DeclareMathOperator{\Hom}{Hom}
\DeclareMathOperator{\Sp}{Sp}
\DeclareMathOperator{\End}{End}
\DeclareMathOperator{\Pic}{Pic}
\renewcommand{\emph}{\textbf} 										
\newcommand{\cj}[1]{\overline{#1}}								
\newcommand{\ip}[2]{\langle #1\mid #2\rangle}			
\renewcommand{\iff}{\Leftrightarrow}								
\newcommand{\imp}{\Rightarrow}										
\newcommand{\cs}{C*}
\newcommand{\hlink}[2]{\href{#1}{\texttt{#2}}} 
\newtheorem{theorem}{Theorem}[section]			
\newtheorem{lemma}[theorem]{Lemma}
\newtheorem{proposition}[theorem]{Proposition}
\newtheorem{definition}[theorem]{Definition}
\newtheorem{remark}[theorem]{Remark}
\numberwithin{equation}{section}  	
\title{\textbf{A Spectral Theorem for Imprimitivity C*-bimodules}}
\author{\normalsize Paolo Bertozzini \footnote{Partially supported by the Thai Research Fund: grant n.~RSA4780022.} @, 
Roberto Conti $^*\ddag$, Wicharn Lewkeeratiyutkul $^*\S$
\\
\normalsize @ e-mail: \texttt{paolo.th@gmail.com}
\\ 
\normalsize $\ddag$ \textit{Mathematics, School of Mathematical and Physical Sciences,} 
\\ 
\normalsize \textit{University of Newcastle, Callaghan, NSW 2308, Australia}
\\
\normalsize  e-mail: \texttt{Roberto.Conti@newcastle.edu.au} 
\\
\normalsize $\S$ \textit{Department of Mathematics, Faculty of Science,}
\\
\normalsize \textit{Chulalongkorn University, Bangkok 10330, Thailand}
\\ 
\normalsize  e-mail: \texttt{Wicharn.L@chula.ac.th}
}
\date{\normalsize{08 December 2008}}
\begin{document}

\maketitle

\begin{abstract} \noindent 
After recalling in detail some basic definitions on Hilbert C*-bimodules, Morita equivalence and imprimitivity, we discuss a spectral reconstruction theorem for imprimitivity Hilbert C*-bimodules over commutative unital C*-algebras and consider some of its applications in the theory of commutative full C*-categories.   

\medskip

\noindent 
MSC-2000: 
					46L08,		
					46L87,		
					16D90,		
	  				46M20. 		

\medskip

\noindent
Keywords: Imprimitivity C*-bimodule, Hermitian Line Bundle,  C*-category, 
\\
Non-commutative Geometry. 
\end{abstract}

\section{Introduction} 
A.~Connes' non-commutative geometry~\cite{C} is the most powerful incarnation of R.~Descartes' idea of trading ``geometrical spaces'' with commutative ``algebras of coordinates'' and it is based on the existence of suitable dualities between categories constructed from commutative algebras and categories of their ``spectra''. 
The most celebrated example is I.~Gel'fand-M.~Na\u\i mark theorem (see e.g.~\cite[Theorem~II.2.2.4]{B}) asserting that, via Gel'fand transform, a unital commutative C*-algebra $\As$ is isomorphic to the algebra of continuous complex-valued functions on a compact Hausdorff topological space, namely the spectrum of $\As$. 
In this way a commutative unital \hbox{C*-algebra} can be reconstructed (up to isomorphism) from its spectrum. 

The equally famous Serre-Swan theorem (see e.g.~\cite[Theorem~6.18]{Kar}) permits the reconstruction, up to isomorphism, of a finite projective module over a commutative unital \hbox{C*-algebra} from a spectrum that turns out to be a finite-rank complex vector bundle over the Gel'fand spectrum of the C*-algebra. 
When we restrict to the case of Hilbert C*-modules over commutative unital C*-algebras, Serre-Swan theorem admits a more powerful 
formulation, Takahashi theorem~\cite{Ta1,Ta2,W}, with spectra given by Hilbert bundles over compact Hausdorff spaces. 

The purpose of this paper is to start the development of a spectral reconstruction theorem for suitable bimodules over commutative unital C*-algebras, i.e.~a ``bivariant version'' of Takahashi and Serre-Swan results, considering for now the case of imprimitivity Hilbert C*-bimodules. 

In order to make the result almost completely self-contained, we precede the discussion of our spectral theorem with a detailed treatment of basic facts on imprimitivity C*-bimodules and Morita equivalence including an explicit construction of a natural isomorphism between a pair of C*-algebras associated to a given imprimitivity Hilbert C*-bimodule over them. 

Our main result is that the spectrum of an imprimitivity Hilbert C*-bimodule over two commutative unital C*-algebras is described by a Hermitian line bundle over a compact Hausdorff space that is the graph of a canonical homeomorphism between the Gel'fand spectra of the two unital C*-algebras i.e.~every imprimitivity Hilbert C*-bimodule is isomorphic to a suitably twisted bimodule of sections of this ``spectral'' Hermitian line bundle. 

We will also collect together some facts about imprimitivity C*-bimodules in the setting of C*-categories that provide a useful background for our study of a categorical Gel'fand duality~\cite{BCL2} and that cannot be easily found in the literature. 

\medskip

The content of the paper is as follows. 

In section~\ref{sec: modules}, for the benefit of the readers, we recall the basic definitions and properties of Hilbert C*-modules. 
In subsection~\ref{sec: imp-bim-cat} we explore some specific properties of imprimitivity bimodules arising from C*-categories that will be crucial in the study of the categorification of Gel'fand duality that will be undergone in~\cite{BCL2}. 
Section~\ref{sec: bimodule} contains the proof of the spectral reconstruction theorem for imprimitivity Hilbert C*-bimodules as well as some relevant bibliographical references to other available spectral results for C*-modules. 

The complete construction of a bivariant duality, between categories of ``bivariant Hermitian (line) bundles'' and categories of (imprimitivity) Hilbert C*-bimodules over commutative unital C*-algebras, will not be completed here (in particular there is no discussion of the appropriate classes of morphisms and no construction of the section/spectrum functors supporting such a duality), but it is our intention to return later to this topic. 

Part of the results presented here have been announced in our survey paper~\cite{BCL} and have been presented in several seminars in Thailand, Australia, Italy, UK since May 2006. 

\section{Preliminaries on Hilbert C*-Modules}
\label{sec: modules}

For convenience of the reader and in order to establish notation and terminology, we provide here some background material on the theory of Hilbert \cs-modules. General references are the books by N.~Wegge-Olsen~\cite{WO}, C.~Lance~\cite{La} and B.~Blackadar~\cite[Section~II.7]{B}. 

\smallskip

In the following, $\As,\Bs, \dots$ denote unital \cs-algebras and $\As_+:=\{a^*a\in \As \ | \ a\in \As\}$ is the positive part of the \cs-algebra $\As$.  
\begin{definition}
a \emph{right pre-Hilbert \cs-module} $M_\Bs$ over a unital \cs-algebra $\Bs$ is a unital right module over the unital ring $\Bs$ that is equipped with a $\Bs$-valued inner product $(x,y)\mapsto \ip{x}{y}_\Bs$ such that:
\begin{gather*}
\ip{z}{x+y}_\Bs=\ip{z}{x}_\Bs + \ip{z}{y}_\Bs \quad \forall x,y,z\in M, \\
\ip{z}{x\cdot b}_\Bs=\ip{z}{x}_\Bs b \quad \forall x,y \in M, \ \quad \forall b\in \Bs, \\ 
\ip{y}{x}_\Bs=\ip{x}{y}_\Bs^* \quad \forall x,y\in M, \\
\ip{x}{x}_\Bs\in \Bs_+ \quad \forall x\in M, \\
\ip{x}{x}_\Bs=0_\Bs \imp x=0_M. 
\end{gather*}

Analogously, a \emph{left pre-Hilbert \cs-module} ${}_\As M$ over a unital \cs-algebra $\As$ is a unital left module $M$ over the unital ring $\As$, 
that is equipped with an $\As$-valued inner product $M\times M\to \As$ denoted by $(x,y)\mapsto {}_\As\ip{x}{y}$. Here the $\As$-linearity in on the first variable.
\end{definition}

\begin{remark} 
A right (respectively left) pre-Hilbert \cs-module $M_\Bs$ over the \cs-algebra $\Bs$ is naturally equipped with a norm (for a proof see for 
example~\cite[Lemma~2.14 and Corollary~2.15]{FGV}): 
\begin{equation*}
\|x\|_M:=\sqrt{\|\ip{x}{x}_\Bs\|_\Bs}, \quad \forall x \in M.  
\end{equation*}
\end{remark}

\begin{definition}
A right (resp.~left) \emph{Hilbert \cs-module} is a right (resp.~left) pre-Hilbert \cs-module over a \cs-algebra $\Bs$ that is a Banach space 
with respect to the previous norm $\|\cdot\|_M$ (resp.~${}_M\|\cdot\|$). 
\end{definition}

\begin{definition}
A right Hilbert \cs-module $M_\Bs$ is said to be \emph{full} if 
\begin{equation*}
\ip{M_\Bs}{M_\Bs}_\Bs:=\cj{\spa\{\ip{x}{y}_\Bs\ | \ x,y\in M_\Bs\}}= \Bs, 
\end{equation*}
where the closure is in the norm topology of the \cs-algebra $\Bs$.
A similar definition holds for a left Hilbert \cs-module. 
\end{definition}

We recall the following well-known result (see~\cite[p.~65]{FGV}), whose proof is included here: 
\begin{lemma}\label{lem: part}
Let $M_\Bs$ be a right Hilbert \cs-module over a unital \cs-algebra $\Bs$. 
Then $M_\Bs$ is full if and only if \,$\spa\{\ip{x}{y}_\Bs \ | \ x,y\in M_\Bs\}=\Bs$.
\end{lemma}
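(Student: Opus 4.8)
The plan is to show that the linear span $I:=\spa\{\ip{x}{y}_\Bs \ | \ x,y\in M_\Bs\}$ is not merely a subspace but a two-sided ideal of $\Bs$, and then to exploit the unitality of $\Bs$ to upgrade density of $I$ to the equality $I=\Bs$. One implication is immediate: if $I=\Bs$ then a fortiori $\cj{I}=\Bs$, so $M_\Bs$ is full. The content is the converse, and here the whole point is that unitality lets us extract an invertible element sitting inside $I$.

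First I would verify that $I$ is a two-sided $*$-ideal. That $I$ is a right ideal is immediate from $\Bs$-linearity in the second slot, since $\ip{x}{y}_\Bs\,b=\ip{x}{y\cdot b}_\Bs\in I$ for every $b\in\Bs$. Combining the symmetry relation $\ip{y}{x}_\Bs=\ip{x}{y}_\Bs^*$ with right linearity and passing to adjoints gives $b\,\ip{x}{y}_\Bs=\ip{x\cdot b^*}{y}_\Bs\in I$, so $I$ is also a left ideal; closure under adjoints is clear from the same symmetry relation. Thus products of generators by arbitrary elements of $\Bs$, on either side, stay inside the span.

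Next, fullness means precisely that $\cj{I}=\Bs$, and since $\Bs$ is unital this yields $1_\Bs\in\cj{I}$. Hence I can choose $a\in I$ with $\|1_\Bs-a\|_\Bs<1$. A standard Neumann-series estimate then shows that $a$ is invertible in $\Bs$, and because $I$ is a left ideal containing $a$ we conclude $1_\Bs=a^{-1}a\in I$. Finally, for an arbitrary $b\in\Bs$ we have $b=b\,1_\Bs\in I$, so $I=\Bs$, as required.

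The only genuinely delicate point is the role of unitality: the statement fails for general \cs-algebras, where a dense ideal need not exhaust the algebra, so the argument must use $1_\Bs$ crucially — both to locate an element of $I$ within distance $1$ of the identity and to absorb its inverse back into the ideal via the left-ideal property. Everything else is routine manipulation of the inner-product axioms, so I expect no real obstacle once the ideal structure of $I$ is recorded.
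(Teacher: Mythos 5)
Your proof is correct and follows essentially the same route as the paper's: both use fullness plus unitality to find an element of the span within distance $1$ of $1_\Bs$, invoke the Neumann series for invertibility, and then absorb the inverse using the ideal property of the span (the paper absorbs $b_\epsilon$ into the second slot via $\sum_j\ip{x_j}{y_j b_\epsilon}_\Bs=1_\Bs$, i.e.\ the right-ideal property, where you use the left-ideal property). Your explicit verification that the span is a two-sided $*$-ideal is a point the paper leaves implicit, but the argument is the same.
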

\begin{proof}
If $M_\Bs$ is full, for any $\epsilon >0$, we can find a natural number $n \in \NN_0$ and elements $x_j,y_j\in M$, with $j=1,\dots,n$, such that 
\begin{equation*}
\|\sum_{j=1}^{n} \ip{x_j}{y_j}_\Bs-1_\Bs\|_\Bs<\epsilon. 
\end{equation*}
Taking $\epsilon\leq 1$, we see that $\sum_{j=1}^{n}\ip{x_j}{y_j}_\Bs$ is  invertible i.e.~there exists an element $b_\epsilon$ in $\Bs$ 
such that $(\sum_{j=1}^{n}\ip{x_j}{y_j}_\Bs)b_\epsilon=1_\Bs$.
Hence $\sum_{j=1}^{n}\ip{x_j}{y_jb_\epsilon}_\Bs=1_\Bs$, i.e.~$1_\Bs$ is in the ideal $\spa\{\ip{x}{y}_\Bs\ | \ x,y\in M_\Bs\}$ that therefore coincides with $\Bs$. 
\end{proof}

We note that the notion of Hilbert C*-modules behaves naturally under quotients:

\begin{proposition}\label{prop: quot-mod}
Let $M_\As$ be a right Hilbert \cs-module over a unital \cs-algebra $\As$ and $\Is\subset\As$ an involutive ideal in $\As$. 
Then the set $M\Is:=\{\sum_{j=1}^N x_ja_j \ | \ x_j\in M, \ a_j\in \Is, \ N\in \NN_0\}$ is a submodule of $M$. 
The quotient module $M/(M\Is)$ has a natural structure as a right Hilbert \cs-module over the quotient \cs-algebra $\As/\Is$. 
If $M$ is full over $\As$, also $M/(M\Is)$ is full over $\As/\Is$. 
A similar statement holds for a left Hilbert \cs-module. 
\end{proposition}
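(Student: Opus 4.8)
The plan is to build the quotient inner product by hand and then to identify the submodule $M\Is$ with the kernel of the resulting quotient seminorm; the only genuinely analytic input is an approximate unit for $\Is$ together with a factorization argument. First I would dispose of the algebraic assertions. That $M\Is$ is a right $\As$-submodule is immediate: it is closed under sums and, since $\Is$ is a right ideal, $(\sum_j x_j a_j)c=\sum_j x_j(a_jc)\in M\Is$ for every $c\in\As$. Writing $\pi\colon\As\to\As/\Is$ for the quotient homomorphism (for $\As/\Is$ to be a \cs-algebra we take $\Is$ norm-closed, so that it carries a bounded approximate unit), I would set $[x]\cdot\pi(a):=[x\cdot a]$ and $\ip{[x]}{[y]}_{\As/\Is}:=\pi(\ip{x}{y}_\As)$. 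Both are well defined once one checks that $\ip{z}{y}_\As\in\Is$ whenever $z\in M\Is$: for $z=\sum_j x_ja_j$ one has $\ip{z}{y}_\As=\sum_j a_j^*\ip{x_j}{y}_\As\in\Is$, since $\Is$ is self-adjoint and a left ideal. The pre-Hilbert axioms and positivity $\ip{[x]}{[x]}_{\As/\Is}=\pi(\ip{x}{x}_\As)\in(\As/\Is)_+$ then transfer from $M_\As$ through $\pi$.

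The heart of the matter is non-degeneracy, i.e. the identification $M\Is=N$, where $N:=\{x\in M\st\ip{x}{x}_\As\in\Is\}$. Using an approximate unit $(e_\lambda)$ of $\Is$ with $0\le e_\lambda\le 1_\As$, a direct expansion gives $\ip{x-xe_\lambda}{x-xe_\lambda}_\As=(1_\As-e_\lambda)\ip{x}{x}_\As(1_\As-e_\lambda)$; when $\ip{x}{x}_\As\in\Is$ the right-hand side tends to $0$ in norm, so $xe_\lambda\to x$ with $xe_\lambda\in M\Is$, whence $N\subseteq\overline{M\Is}$. The reverse inclusion $M\Is\subseteq N$ is the computation used above, and $N$ is closed as the preimage of $\{0\}$ under the continuous map $x\mapsto\pi(\ip{x}{x}_\As)$; hence $\overline{M\Is}=N$. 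To upgrade this to $M\Is=N$ I would invoke the Cohen--Hewitt factorization theorem for the Banach $\Is$-module $M$: as $\Is$ has a bounded approximate unit, the set of finite sums $M\Is$ is already norm-closed (indeed each element factors as a single product $x\cdot a$). This is the step I expect to be the main obstacle, since without factorization one only controls the closure of $M\Is$.

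It then remains to see that $M/(M\Is)$ is complete for its own norm. I would show that the \cs-module norm $\|[x]\|'=\|\pi(\ip{x}{x}_\As)\|^{1/2}$ coincides with the Banach quotient norm $\inf_{z\in M\Is}\|x-z\|$. For any $z\in M\Is$ the cross terms lie in $\Is$, so $\pi(\ip{x}{x}_\As)=\pi(\ip{x-z}{x-z}_\As)$, giving $\|[x]\|'\le\|x-z\|$ and hence $\|[x]\|'\le\inf_{z}\|x-z\|$; conversely the approximate-unit formula $\|x-xe_\lambda\|^2=\|(1_\As-e_\lambda)\ip{x}{x}_\As(1_\As-e_\lambda)\|\to\|\pi(\ip{x}{x}_\As)\|$ for the quotient norm yields the reverse inequality. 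Since $M\Is$ is closed in the Banach space $M$, the quotient is complete, so $M/(M\Is)$ is a genuine right Hilbert $\As/\Is$-module.

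Finally, for fullness I would argue through Lemma~\ref{lem: part}. If $M_\As$ is full then, $\As$ being unital, $\spa\{\ip{x}{y}_\As\st x,y\in M\}=\As$, so applying $\pi$ gives $\spa\{\ip{[x]}{[y]}_{\As/\Is}\}=\pi(\As)=\As/\Is$; as $\As/\Is$ is unital, Lemma~\ref{lem: part} shows that $M/(M\Is)$ is full over $\As/\Is$. The statement for a left Hilbert \cs-module follows by the same arguments, with $\Is M$, left ideals, and ${}_\As\ip{\cdot}{\cdot}$ replacing their right-handed counterparts.
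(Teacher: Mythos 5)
Your proof is correct, and it is considerably more complete than the one in the paper. The paper's proof writes down exactly the quotient operations and inner product that you define, asserts that well-definedness is ``immediately checked'', and then simply declares that $M/(M\Is)$ becomes a right Hilbert \cs-module; fullness is dispatched in one line, essentially as you do via Lemma~\ref{lem: part}. What the paper never addresses --- and what you correctly single out as the heart of the matter --- is that $M\Is$ is defined by \emph{finite sums}, not as a closure, so two things genuinely require proof: non-degeneracy of the quotient inner product (i.e.\ that $\ip{x}{x}_\As\in\Is$ forces $x\in M\Is$ itself, not merely $x\in\overline{M\Is}$) and completeness of $M/(M\Is)$ in its \cs-module norm. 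Your resolution --- the approximate-unit identity $\ip{x-xe_\lambda}{x-xe_\lambda}_\As=(1_\As-e_\lambda)\ip{x}{x}_\As(1_\As-e_\lambda)$ giving $N\subseteq\overline{M\Is}$, Cohen--Hewitt factorization to show $M\Is$ is already closed, and the identification of the \cs-norm on the quotient with the Banach quotient norm --- supplies precisely the analytic content the paper omits. (In the Hilbert-module setting one can bypass Cohen--Hewitt: every $x\in M$ factors as $x=y\,\ip{x}{x}_\As^{1/4}$ for some $y\in M$, and if $\ip{x}{x}_\As\in\Is$ then $\ip{x}{x}_\As^{1/4}\in\Is$ by functional calculus, so $x\in M\Is$ directly; but your route is equally valid.) You are also right to make explicit the hypothesis, implicit in the paper, that $\Is$ is norm-closed, since otherwise $\As/\Is$ is not a \cs-algebra and the statement does not even make sense.
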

\begin{proof}
Clearly $M\Is$ is a submodule of the right $\As$-module $M$. 
It is immediately checked that the operation of right multiplication by elements of $\As/\Is$ and the $\As/\Is$-valued inner product given by:
\begin{gather*}
(x+M\Is)\cdot (a+\Is):=xa+M\Is, \quad \forall x+M\Is\in M/(M\Is) \ \forall a+\Is\in \As/\Is, \\
\ip{x+M\Is}{y+M\Is}_{\As/\Is}:=\ip{x}{y}_\As +\Is, \quad \forall x+M\Is,\,y+M\Is\in M/(M\Is), 
\end{gather*}
are well-defined so that $M/(M\Is)$ becomes a right Hilbert \cs-module over $\As/\Is$. 
Of course if $\ip{M}{M}=\As$, also $\ip{M/(M\Is)}{M/(M\Is)}=\As/\Is$. 
\end{proof}

\begin{definition}
A \emph{morphism of right Hilbert \cs-modules}, from $(M_\Bs,\ip{\cdot}{\cdot}_\Bs)$ into $(N_\Bs,\ip{\cdot}{\cdot}'_\Bs)$ is an adjointable map i.e.~a function $T:M_\Bs\to N_\Bs$ such that
\begin{gather*}
\exists S: N\to M, \quad \ip{S(x)}{y}_\Bs=\ip{x}{T(y)}'_\Bs, \quad \forall x\in N, \ 
\forall y\in M.
\end{gather*}
\end{definition}

\begin{remark}
It is well-known, see e.g.\ N.~Landsman~\cite[Theorem~3.2.5]{L1}, that an adjoint\-able map $T\colon M_\Bs\to N_\Bs$ between Hilbert \cs-modules is necessarily continuous and $\Bs$-linear:
\begin{gather*}
T(xa+yb)=T(x)a+T(y)b, \quad \forall x,y\in M, \ \forall a,b\in \Bs. 
\end{gather*}
Furthermore, the family $\End(M_\Bs)$ of morphisms on $\M_\Bs$ has a natural structure of a unital \cs-algebra. 
\end{remark}
Given $x,y\in M_\Bs$, an operator $\theta_{x,y}:M_\Bs\to M_\Bs$ of the form 
\begin{equation}\label{eq: thetaxy}
\theta_{x,y}:z\mapsto x\cdot\ip{y}{z}_\Bs
\end{equation} 
is clearly a morphism of the right Hilbert \cs-module $M_\Bs$ with adjoint given by $\theta_{y,x}$. 

\begin{definition}\label{rem: comp}
A \emph{finite-rank} operator of the Hilbert \cs-module $M_\Bs$ is a finite linear combination of operators of the form $\theta_{x,y}$,  
$x,y\in M_\Bs$, as described in~\eqref{eq: thetaxy}. 

The family $\Ks(M_\Bs)$ of \emph{compact} operators of the right Hilbert \cs-module $M_\Bs$ is by definition the \cs-subalgebra of $\End(M_\Bs)$ generated by the finite-rank operators. 
\end{definition}
\begin{definition}\label{def: twist}
Let $M_\Bs$ be a right unital module over a unital ring $\Bs$ and let $\alpha:\As\to\Bs$ be a unital homomorphism of rings. The \emph{right twisted module} of $M_\Bs$ by the homomorphism $\alpha$ is the right unital module $M_\alpha$ over the unital ring $\As$ with the right action defined by:
\begin{equation*}
x\cdot a:=x\cdot \alpha(a), \quad \forall x\in M, \ \forall a\in \As.
\end{equation*}
The \emph{left twisted module} of ${}_\Bs M$ by the homomorphism $\alpha:\As\to\Bs$ is analogously defined. 
\end{definition}

\begin{remark}\label{rem: twist}
If $M_\Bs$ is a right (pre-)Hilbert \cs-module and $\alpha:\As\to\Bs$ is an isomorphism of unital \cs-algebras, then the right $\As$-module $M_\alpha$ obtained by right twisting $M_\Bs$ by the isomorphism $\alpha$ has a natural structure as a (pre-)Hilbert \cs-module over $\As$ with the inner product given by $\ip{x}{y}_\As:=\alpha^{-1}(\ip{x}{y}_\Bs)$. 
\end{remark}

\begin{proposition}\label{prop: twist} 
Let $\alpha\colon \As\to\Bs$ be a unital isomorphism of unital rings. Let $M_\As$ and $N_\Bs$ be unital right modules over $\As$ and respectively $\Bs$.   
Then $\Phi\colon M_\As\to N_\alpha$ is a morphism of right modules over $\As$ if and only if $\Phi\colon M_{\alpha^{-1}}\to N_\Bs$ is a morphism of right $\Bs$-modules. 

The result holds true also when $M_\As$ and $N_\Bs$ are (pre-)Hilbert \cs-modules and $\Phi:M_\As\to N_\alpha$ is a morphism of (pre-)Hilbert \cs-modules over $\As$.
\end{proposition}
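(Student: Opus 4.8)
The plan is to reduce the whole statement to an unfolding of the twisted right actions of Definition~\ref{def: twist} combined with the bijectivity of $\alpha$; no analytic input is needed. First I would make both assertions explicit. By Definition~\ref{def: twist} the right $\As$-action on $N_\alpha$ is $y\cdot a=y\cdot\alpha(a)$ and the right $\Bs$-action on $M_{\alpha^{-1}}$ is $x\cdot b=x\cdot\alpha^{-1}(b)$. Since twisting changes neither the underlying additive groups nor the map $\Phi$ itself, additivity of $\Phi$ is literally the same requirement in both pictures, so the only content to check is compatibility with the scalar actions. Thus ``$\Phi\colon M_\As\to N_\alpha$ is a morphism of right $\As$-modules'' unfolds to
\[
\Phi(x\cdot a)=\Phi(x)\cdot\alpha(a),\qquad \forall x\in M,\ \forall a\in\As,
\]
whereas ``$\Phi\colon M_{\alpha^{-1}}\to N_\Bs$ is a morphism of right $\Bs$-modules'' unfolds to
\[
\Phi\big(x\cdot\alpha^{-1}(b)\big)=\Phi(x)\cdot b,\qquad \forall x\in M,\ \forall b\in\Bs.
\]

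The equivalence of these two identities is then a mere change of variable. Assuming the first, for $b\in\Bs$ I put $a:=\alpha^{-1}(b)$; using $\alpha\circ\alpha^{-1}=\id_\Bs$ the first identity becomes the second. Conversely, assuming the second, for $a\in\As$ I put $b:=\alpha(a)$; using $\alpha^{-1}\circ\alpha=\id_\As$ the second becomes the first. The only property of $\alpha$ invoked is that it is a bijection inverse to $\alpha^{-1}$.

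For the (pre-)Hilbert \cs-module statement I would first recall from Remark~\ref{rem: twist} that the twisted inner products are $\ip{x}{y}_\As=\alpha^{-1}(\ip{x}{y}_\Bs)$ on $N_\alpha$ and $\ip{x}{y}_\Bs=\alpha(\ip{x}{y}_\As)$ on $M_{\alpha^{-1}}$, the positivity and definiteness axioms surviving precisely because $\alpha,\alpha^{-1}$ are \cs-isomorphisms. A morphism of Hilbert \cs-modules is an adjointable map, so I would show that one and the same candidate adjoint $S\colon N\to M$ serves in both pictures. For $\Phi\colon M_\As\to N_\alpha$ adjointability reads, for all $x\in N$ and $y\in M$,
\[
\ip{S(x)}{y}_\As=\ip{x}{\Phi(y)}_\As=\alpha^{-1}\big(\ip{x}{\Phi(y)}_\Bs\big);
\]
applying the isomorphism $\alpha$ to both ends turns the left-hand side into the $\Bs$-valued inner product of $M_{\alpha^{-1}}$ and the right-hand side into $\ip{x}{\Phi(y)}_\Bs$, which is exactly the adjointability relation for $\Phi\colon M_{\alpha^{-1}}\to N_\Bs$; applying $\alpha^{-1}$ reverses the step, so the two notions of morphism coincide with a common adjoint $S$.

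I do not expect any genuine obstacle: the proposition is essentially bookkeeping built on the bijectivity of $\alpha$. The one point requiring care is keeping track, at each occurrence, of which module structure and which of the several coexisting inner products a given symbol refers to, and observing that the transferred $S$ is automatically $\Bs$-linear and continuous by the earlier remark on adjointable maps, so nothing further has to be verified.
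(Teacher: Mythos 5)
Your proof is correct and follows essentially the same route as the paper's: the module-morphism equivalence by the change of variable $b=\alpha(a)$, and the (pre-)Hilbert case by transferring the adjointability identity through $\alpha$ and $\alpha^{-1}$ with one and the same adjoint map. The paper's own argument is just a terser version of exactly these two steps, so nothing needs to be changed.
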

\begin{proof}
Clearly $\Phi(x\cdot a)=\Phi(x)\cdot \alpha(a)$ if and only if $\Phi(x\cdot \alpha^{-1}(b))=\Phi(x)\cdot b$. 
Also $\Phi:M_\As\to N_\alpha$ is adjointable, with adjoint $\Psi$, if and only $\Phi:M_{\alpha^{-1}}\to N_\Bs$ is adjointable with the same adjoint: $\alpha^{-1}(\ip{x}{\Phi(y)}_\Bs)=\ip{\Psi(x)}{y}_\As$
if and only if $\ip{x}{\Phi(y)}_\Bs=\alpha(\ip{\Psi(x)}{y}_\As)$, for all $x\in N$, 
$y\in M$. 
\end{proof}

\subsection{Hilbert \cs-bimodules and Morita Equivalence}\label{app: morita}

Recall that a unital bimodule ${}_\As M_\Bs$ over two unital rings $\As$ and $\Bs$ is a left unital $\As$-module and a right unital $\Bs$-module such that 
$(a\cdot x)\cdot b=a\cdot(x\cdot b)$, for all $a\in \As$, $b\in \Bs$ and $x\in M$. 

\begin{definition}\label{def: bimod}
A \emph{pre-Hilbert \cs-bimodule} ${}_\As M_\Bs$ over a pair of unital \hbox{\cs-algebras} $\As,\Bs$ is a left pre-Hilbert \cs-module over $\As$ and a right pre-Hilbert \cs-module over $\Bs$ such that:
\begin{gather}
(a\cdot x)\cdot b= a\cdot (x\cdot b) \quad \forall a\in \As, \ x\in M, \ b\in \Bs, \\
\ip{x}{ay}_\Bs=\ip{a^*x}{y}_\Bs \quad \forall x,y\in M, \ \forall a\in \As, \label{eq: c} \\
{}_\As\ip{xb}{y}={}_\As\ip{x}{yb^*} \quad \forall x,y\in M, \ \forall b\in \Bs.
\end{gather}

A \emph{correspondence from $\As$ to $\Bs$} is an $\As$-$\Bs$-bimodule that is also a right Hilbert \cs-module over $\Bs$ whose $\Bs$-valued inner product satisfies property~\eqref{eq: c}. 

A \emph{Hilbert \cs-bimodule} ${}_\As M_\Bs$ is a pre-Hilbert \cs-bimodule over $\As$ and $\Bs$ that is simultaneously a left Hilbert \cs-module over $\As$ and a right Hilbert \cs-module over $\Bs$. 

A Hilbert \cs-bimodule is \emph{full} if it is full as a right and also as a left module. 

A full Hilbert \cs-bimodule over the \cs-algebras $\As$-$\Bs$ is said to be an \emph{imprimitivity bimodule} or an \emph{equivalence bimodule} if:
\begin{gather}
{}_\As\ip{x}{y}\cdot z=x\cdot \ip{y}{z}_\Bs, \quad \forall x,y,z\in M. \label{eq: imp}
\end{gather}
\end{definition}

\begin{remark}
Note that our definitions of pre-Hilbert and Hilbert \cs-bimodule are not necessarily in line with often conflicting similar definitions available in the literature: for example, H.~Figueroa-J.~Gracia-Bondia-J.~Varilly~\cite[Definition~4.7]{FGV} and B.~Abadie-R.~Exel~\cite{AE} require pre-Hilbert \cs-bimodules to satisfy condition~\eqref{eq: imp}; A.~Connes~\cite[Page~159]{C} calls Hilbert \cs-bimodules what we call here correspondences (in this case, only one inner product is assumed). 
In an $\As$-$\Bs$ pre-Hilbert \cs-bimodule there are two, usually different, norms:
\begin{gather*}
{}_M \|x\|:=\sqrt{\|{}_\As\ip{x}{x}\|_\As}, \quad 
\|x\|_M:=\sqrt{\|\ip{x}{x}_\Bs\|_\Bs}, \quad \forall x\in M.  
\end{gather*}
The two norms coincide for an imprimitivity bimodule or, more generally, for a pre-Hilbert \cs-bimodule ${}_\As M_\Bs$ such that ${}_\As\ip{x}{x}x=x\ip{x}{x}_\Bs$, for all $x\in M$. In fact 
\begin{align*}
{}_M\|x\|^4 &=\|{}_\As\ip{x}{x}\|_\As^2=\|{}_\As\ip{x}{x}{}_\As\ip{x}{x}\|_\As =\|{}_\As\ip{x\ip{x}{x}_\Bs}{x}\|_\As \\
&\leq \|\ip{x}{x}_\Bs\|_\Bs\cdot \|{}_\As\ip{x}{x}\|_\As=\|x\|^2_M \cdot {}_M\|x\|^2.
\end{align*}
\end{remark}

\begin{definition} 
A \emph{morphism of correspondences} from $\As$ to $\Bs$ is a morphism of right Hilbert \cs-modules over $\Bs$ that further satisfies:
\begin{equation}
T(ax)=aT(x), \quad \forall x\in M, \ \forall a\in \As. \label{eq: mo-co}
\end{equation}
A \emph{morphism of (pre-)Hilbert \cs-bimodules} is just a morphism of right and left (pre-) Hilbert \cs-bimodules. 
\end{definition}
\begin{remark}
Morphisms of correspondences are just morphisms of bimodules that are adjointable for the right \cs-module structure. 

Note that in a (pre-)Hilbert \cs-bimodule there are in general two different notions of left and of right adjoint of a morphism. 
The left and right adjoints of a morphism coincide if and only if ${}_\As\ip{x}{y}=0_\As\iff\ip{x}{y}_\Bs=0_\Bs$, for all $x,y\in M$.
This condition is true for all full (pre)-Hilbert \cs-bimodules such that
\begin{equation}\label{eq: f-o-b}
{}_\As\ip{x}{y}x=x\ip{y}{x}_\Bs, \quad \forall x,y\in {}_\As M_\Bs.
\end{equation}
\end{remark}

\begin{proposition}\label{prop: imp-a-b}
If ${}_\As M_\Bs$ is an imprimitivity bimodule over the unital \cs-algebras $\As$ and $\Bs$, the map $T: \As\to \Ks(M_\Bs)$ given by $\alpha\mapsto T_\alpha$, where we define  $T_\alpha(x):=\alpha\cdot x$, is an isomorphism of \cs-algebras. Furthermore the \cs-algebra of compact operators $\Ks(M_\Bs)$ coincides with the family of finite-rank operators. 
\end{proposition}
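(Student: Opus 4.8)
The plan is to read the imprimitivity condition \eqref{eq: imp} as an identification of the rank-one operators $\theta_{x,y}$ with elements in the image of $T$, and then to exploit fullness of the left module structure — via the left-handed analogue of Lemma~\ref{lem: part} — so that the argument never requires passing to a closure.

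First I would check that $T$ lands in $\End(M_\Bs)$ and is a $*$-homomorphism. Using condition \eqref{eq: c} one computes, for $\alpha\in\As$ and $x,y\in M$, that $\ip{x}{T_\alpha y}_\Bs=\ip{x}{\alpha y}_\Bs=\ip{\alpha^*x}{y}_\Bs=\ip{T_{\alpha^*}x}{y}_\Bs$, so each $T_\alpha$ is adjointable with $(T_\alpha)^*=T_{\alpha^*}$. Additivity, $\CC$-linearity, and multiplicativity $T_{\alpha\beta}=T_\alpha T_\beta$ follow at once from the module axioms and associativity of the left action, so $T\colon\As\to\End(M_\Bs)$ is a $*$-homomorphism.

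The crux is the identity $\theta_{x,y}=T_{\,{}_\As\ip{x}{y}}$, which is exactly \eqref{eq: imp} rewritten: $\theta_{x,y}(z)=x\cdot\ip{y}{z}_\Bs={}_\As\ip{x}{y}\cdot z=T_{\,{}_\As\ip{x}{y}}(z)$. Hence every finite-rank operator lies in $T(\As)$. For the reverse inclusion and for injectivity I would invoke fullness of ${}_\As M$: the left analogue of Lemma~\ref{lem: part} yields $\spa\{{}_\As\ip{x}{y}\mid x,y\in M\}=\As$ with no closure, so there are finitely many $u_k,v_k\in M$ with $1_\As=\sum_k{}_\As\ip{u_k}{v_k}$. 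Then for any $\alpha$ one has $T_\alpha=\sum_k T_{\alpha\cdot{}_\As\ip{u_k}{v_k}}=\sum_k T_{\,{}_\As\ip{\alpha u_k}{v_k}}=\sum_k\theta_{\alpha u_k,v_k}$, so $T(\As)$ is precisely the set of finite-rank operators. Injectivity drops out of the same decomposition: if $T_\alpha=0$ then $\alpha u_k=0$ for each $k$, whence $\alpha=\alpha\cdot 1_\As=\sum_k{}_\As\ip{\alpha u_k}{v_k}=0$.

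Finally I would assemble the conclusion. As an injective $*$-homomorphism of \cs-algebras, $T$ is automatically isometric, so $T(\As)$ is a closed, hence complete, $*$-subalgebra of $\End(M_\Bs)$; since we have shown it coincides with the finite-rank operators, the finite-rank operators already form a \cs-subalgebra. Because $\Ks(M_\Bs)$ is by Definition~\ref{rem: comp} the \cs-subalgebra they generate, it follows that $\Ks(M_\Bs)=T(\As)$ equals the family of finite-rank operators, and $T\colon\As\to\Ks(M_\Bs)$ is a $*$-isomorphism. The step demanding the most care is the use of the left-handed Lemma~\ref{lem: part}: it is precisely unitality of $\As$ together with fullness that realises $1_\As$ — and therefore every $\alpha$ — as a \emph{finite} sum of left inner products, which is what forces the image into the finite-rank operators rather than merely into the norm closure of them.
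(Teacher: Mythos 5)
Your proposal is correct and takes essentially the same route as the paper: both rest on reading the imprimitivity condition \eqref{eq: imp} as the identity $\theta_{x,y}=T_{{}_\As\ip{x}{y}}$, then using the left-module version of Lemma~\ref{lem: part} to write elements of $\As$ as \emph{finite} sums of left inner products, so that $T(\As)$ is exactly the set of finite-rank operators and, being a closed $*$-subalgebra of $\End(M_\Bs)$, coincides with $\Ks(M_\Bs)$. Your only variations --- decomposing $1_\As$ and absorbing $\alpha$ into the inner products, and deriving injectivity explicitly from that decomposition rather than citing fullness --- are cosmetic refinements of the same argument.
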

\begin{proof}
Clearly $T_\alpha$ is a morphism of the Hilbert \cs-module $M_\Bs$ with adjoint given by $T_{\alpha^*}$. 
The map $\alpha\mapsto T_\alpha$ is a unital involutive homomorphism from $\As$ to $\End(M_\Bs)$ and so its image is a unital \cs-subalgebra of the \cs-algebra $\End(M_\Bs)$. Furthermore, from the fullness of $M_\Bs$, we see that 
$\alpha\mapsto T_\alpha$ is injective so that $\As$ is isomorphic to its image under $T$ in $\End(M_\Bs)$. 

The image of $T$ contains all the finite-rank operators, for if $S=\sum_k\theta_{x_k,y_k}$, with $x_k,y_k\in M_\Bs$, then for all $z\in M_\Bs$,  
\begin{align*}
S(z) &= \sum_k\theta_{x_k,y_k}(z) 
=\sum_k x_k\ip{y_k}{z}_\Bs =\sum_k{}_\As\ip{x_k}{y_k}z
=T_{\alpha}(z), 
\end{align*}
where $\alpha:=\sum_k{}_\As\ip{x_k}{y_k}$. 
Since, by lemma~\ref{lem: part}, every $\alpha\in \As$ can always be written as a finite combination $\alpha=\sum_k{}_\As\ip{x_k}{y_k}$, we see that $T_\alpha$ is always a finite-rank operator, and hence the image of $T$ coincides with the family of finite-rank operators. 

Since the closure of the finite-rank operators is the \cs-algebra of compact operators $\Ks(M_\Bs)$, we see that $T$ is an isomorphism of \cs-algebras from $\As$ onto $\Ks(M_\Bs)$ and that $\Ks(M_\Bs)$ coincides with the family of finite-rank operators.  
\end{proof}

There is a natural notion of \emph{Rieffel interior tensor product} between Hilbert \cs-modules and correspondences~\cite{R1}:
\begin{proposition}
Given two unital \cs-algebras $\As,\Bs$, let $M_\As$ be a right Hilbert \cs-module over $\As$ and let ${}_\As N_\Bs$ be a correspondence from $\As$ to $\Bs$. 
The algebraic tensor product $M\otimes_\As N$ of the right $\As$-module $M$ with the $\As$-$\Bs$-bimodule $N$ is naturally a right Hilbert \cs-module over $\Bs$ with the unique $\Bs$-valued inner product such that:
\begin{equation*}
\ip{x_1\otimes y_1}{x_2\otimes y_2}_\Bs=\ip{y_1}{\ip{x_1}{x_2}_\As\cdot y_2}_\Bs, \quad \forall x_1,x_2\in M, \ \forall y_1,y_2\in N.
\end{equation*}
Similarly, the algebraic tensor product $M\otimes_\Bs N$, of a pair of (pre-)Hilbert \cs-bimodules ${}_\As M_\Bs$, ${}_\Bs N_\Cs$ has a natural structure of (pre-)Hilbert \cs-bimodule on the unital \cs-algebras $\As$-$\Cs$ where the ``left-action'' of $\As$ satisfies:
\begin{equation*}
a (x\otimes y):=(ax)\otimes y, \quad \forall a\in\As, \ \forall x\in M, \ y\in N. 
\end{equation*} 
\end{proposition}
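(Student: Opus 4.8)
The plan is to build the right $\Bs$-module structure and the semi-inner product on the algebraic balanced tensor product $M\otimes_\As N$, to establish positivity, and then to pass to the associated Hilbert module by separation and completion. First I would equip $M\otimes_\As N$ with the right $\Bs$-action determined by $(x\otimes y)\cdot b:=x\otimes(y\cdot b)$, which descends to the $\As$-balanced tensor product because the left $\As$-action and the right $\Bs$-action on $N$ commute. Next I would define a $\Bs$-valued sesquilinear form by extending the stated formula additively in both arguments, and check that it is well defined on $M\otimes_\As N$: the only nontrivial point is invariance under the balancing relation $xa\otimes y=x\otimes ay$, and this is exactly where property~\eqref{eq: c} of a correspondence enters, via $\ip{y_1}{(a^*\ip{x_1}{x_2}_\As)y_2}_\Bs=\ip{ay_1}{\ip{x_1}{x_2}_\As y_2}_\Bs$ together with the identity $\ip{x_1a}{x_2}_\As=a^*\ip{x_1}{x_2}_\As$. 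The Hermitian symmetry and the $\Bs$-linearity in the second variable are then routine consequences of the corresponding properties of the two given inner products.

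The main obstacle is positivity: I must show $\ip{\xi}{\xi}_\Bs\ge 0$ for every $\xi=\sum_{i=1}^n x_i\otimes y_i$. The key observation is that the Gram matrix $A:=(\ip{x_i}{x_j}_\As)_{i,j}$ is a positive element of the \cs-algebra $M_n(\As)$; this is standard and can be seen by realizing $A$ as $T^*T$ for the adjointable operator $T\colon\As^n\to M$, $T(a_1,\dots,a_n)=\sum_i x_i a_i$. Writing $A=C^*C$ with $C=A^{1/2}=(c_{ij})\in M_n(\As)$, so that $\ip{x_i}{x_j}_\As=\sum_k c_{ki}^*c_{kj}$, and applying property~\eqref{eq: c} once more yields
\begin{equation*}
\ip{\xi}{\xi}_\Bs=\sum_{i,j}\ip{y_i}{\ip{x_i}{x_j}_\As y_j}_\Bs
=\sum_k\ip{w_k}{w_k}_\Bs, \qquad w_k:=\sum_i c_{ki}\,y_i\in N,
\end{equation*}
which is a sum of positive elements of $\Bs$, hence positive.

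Once positivity is in hand, the form is a $\Bs$-valued semi-inner product, so Cauchy--Schwarz shows that the null set $N_0:=\{\xi\mid\ip{\xi}{\xi}_\Bs=0\}$ is a $\Bs$-submodule on which the form vanishes identically; quotienting by $N_0$ gives a genuine pre-Hilbert $\Bs$-module, and completing in the induced norm produces the Hilbert \cs-module $M\otimes_\As N$ (in the merely pre-Hilbert setting of the second assertion one simply passes to the quotient). Uniqueness of the inner product is immediate, since elementary tensors span $M\otimes_\As N$ and the formula forces its values there. For the second assertion I would apply this construction with $M$ viewed as a right Hilbert \cs-module over $\Bs$ and ${}_\Bs N_\Cs$ as a $\Bs$-$\Cs$ correspondence to obtain the right Hilbert $\Cs$-module structure, and then run the mirror-image (left-handed) argument --- using the left $\Bs$-valued inner product of $N$ and the left $\As$-valued inner product of $M$, with the balancing now over $\Bs$ and ${}_\As\ip{x_1\otimes y_1}{x_2\otimes y_2}:={}_\As\ip{x_1\cdot{}_\Bs\ip{y_1}{y_2}}{x_2}$ --- to obtain the left Hilbert $\As$-module structure with $a(x\otimes y)=(ax)\otimes y$. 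The remaining work is to verify the pre-Hilbert \cs-bimodule compatibility axioms of Definition~\ref{def: bimod} for the two resulting inner products, which reduce to the corresponding identities for $M$ and $N$ after unwinding the formulas on elementary tensors.
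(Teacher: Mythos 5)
You should know at the outset that the paper contains no proof of this proposition to compare against: it is stated as a known fact, with the construction of the interior tensor product simply attributed to Rieffel~\cite{R1}. What you have written is essentially the standard construction from the literature (see C.~Lance~\cite{La}, Chapter~4), and it is correct. The three essential points are all in place: well-definedness of the $\Bs$-valued form on the $\As$-balanced tensor product rests exactly on property~\eqref{eq: c}, which is precisely why a correspondence rather than a bare bimodule is required; positivity is the Gram-matrix argument, and your realization of $\bigl(\ip{x_i}{x_j}_\As\bigr)_{i,j}$ as $T^*T$ for the adjointable operator $T\colon\As^n\to M$ is the standard proof that it is positive in $M_n(\As)\cong\End(\As^n)$; and separation/completion together with density of the elementary tensors give existence and uniqueness. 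Note that the proposition's phrase ``the algebraic tensor product \dots is a right Hilbert \cs-module'' is the customary abuse of language for the separated completion, which is exactly what your construction produces; in fact, when $M$ is complete the semi-inner product is already definite on the balanced algebraic tensor product (apply the factorization $X=Z\ip{X}{X}^{1/4}$, valid in any Hilbert \cs-module, to the row $X=(x_1,\dots,x_n)$ in the Hilbert $M_n(\As)$-module $M^n$, and use that $Ay=0$ forces $A^{1/4}y=0$), so your null space $N_0$ is actually zero --- but quotienting by it is harmless in any case.

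The one place where ``routine'' hides a genuine subtlety is the bimodule half. Your mirror-image formula ${}_\As\ip{x_1\otimes y_1}{x_2\otimes y_2}={}_\As\ip{x_1\cdot{}_\Bs\ip{y_1}{y_2}}{x_2}$ is the right one, and the compatibility axioms of definition~\ref{def: bimod} do reduce to those of $M$ and $N$ on elementary tensors. But the left $\As$-valued and right $\Cs$-valued semi-inner products induce two a priori different norms on the balanced tensor product, so to get a single object carrying both structures you must check that the two null spaces coincide (they do --- by the definiteness remark above both are zero when $M$ and $N$ are Hilbert bimodules) and, if you want a Hilbert rather than pre-Hilbert bimodule, decide in which norm to complete: without an additional hypothesis such as~\eqref{eq: f-o-b} or fullness the two completions need not agree, so in full generality the second assertion should be read as producing a pre-Hilbert \cs-bimodule, exactly as the statement's ``(pre-)'' allows. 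Since the paper itself proves nothing here and later applies the proposition only to imprimitivity bimodules and full \cs-categories, where the two norms coincide, this caveat puts you in no conflict with the paper, but it deserves an explicit sentence in a complete write-up.
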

There is also a natural notion of \emph{Rieffel dual} of a (pre-)Hilbert \cs-bimodule~\cite{R1} that is uniquely defined (up to isomorphism) via the following proposition: 
\begin{proposition}\label{def: ri-dual}
Let ${}_\Bs M_\As$ be a (pre-)Hilbert \cs-bimodule. Then there exist a (pre-) Hilbert \cs-bimodule ${}_\As M^*_\Bs$ and an anti-homomorphism of bimodules 
$\iota: {}_\Bs M_\As \to {}_\As M^*_\Bs$, i.e.~a map such that
$\iota(bxa)=a^*\iota(x)b^*$  $\forall x\in M$ $\forall a\in \As$ $\forall b\in \Bs$, 
satisfying the following universal property:
for every (pre-)Hilbert \cs-bimodule ${}_\As N_\Bs$ and any anti-homomorphism of bimodules 
$\Phi: {}_\Bs M_\As \to {}_\As N_\Bs$ 
there exists a unique homomorphism of bimodules \hbox{$\Phi':{}_\As M^*_\Bs \to {}_\As N_\Bs$} such that $\Phi=\Phi'\circ\iota$.
\end{proposition}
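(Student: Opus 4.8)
The plan is to carry out the standard ``conjugate module'' construction and then observe that the stated universal property is essentially formal. I would take $M^*$ to be a set $\{\iota(x)\st x\in M\}$ in formal bijection with $M$ via a symbol $\iota$, declared additive so that $\iota(x)+\iota(y):=\iota(x+y)$; the left $\As$- and right $\Bs$-actions are defined by twisting by the involution and swapping the sides of the original actions,
\begin{equation*}
a\cdot\iota(x):=\iota(x\cdot a^*), \qquad \iota(x)\cdot b:=\iota(b^*\cdot x), \quad \forall a\in\As, \ \forall b\in\Bs, \ \forall x\in M,
\end{equation*}
and the two inner products by transporting and interchanging those of ${}_\Bs M_\As$,
\begin{equation*}
{}_\As\ip{\iota(x)}{\iota(y)}:=\ip{x}{y}_\As, \qquad \ip{\iota(x)}{\iota(y)}_\Bs:={}_\Bs\ip{x}{y}, \quad \forall x,y\in M.
\end{equation*}
With these definitions the relation $\iota(bxa)=a^*\iota(x)b^*$ holds by construction, and applying it to central scalars $\lambda 1\in\As,\Bs$ shows that $\iota$ is automatically conjugate-linear, so no separate complex structure has to be imposed.

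First I would check that ${}_\As M^*_\Bs$ is a bimodule: the associativity $(a\cdot\iota(x))\cdot b=a\cdot(\iota(x)\cdot b)$ reduces, after unwinding the definitions, to $\iota(b^*xa^*)=\iota(b^*xa^*)$, i.e.~to the associativity of ${}_\Bs M_\As$. Then I would verify that the two displayed inner products satisfy the axioms of a left and a right pre-Hilbert \cs-module respectively. Conjugate symmetry and positivity are immediate from the corresponding properties of $\ip{\cdot}{\cdot}_\As$ and ${}_\Bs\ip{\cdot}{\cdot}$, while module-linearity in the correct slot follows from the conjugate-symmetry relations $\ip{xa}{y}_\As=a^*\ip{x}{y}_\As$ and ${}_\Bs\ip{z}{b^*x}={}_\Bs\ip{z}{x}b$ already available in $M$.

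The step needing genuine care is the pair of bimodule-compatibility conditions of Definition~\ref{def: bimod} for ${}_\As M^*_\Bs$, namely \eqref{eq: c} together with its left analogue. Translating the condition $\ip{a^*\iota(x)}{\iota(y)}_\Bs=\ip{\iota(x)}{a\cdot\iota(y)}_\Bs$ through the definitions turns it into the identity ${}_\Bs\ip{xa}{y}={}_\Bs\ip{x}{ya^*}$, which is exactly the compatibility axiom of the original bimodule ${}_\Bs M_\As$; similarly the $\As$-valued condition reduces to $\ip{b^*x}{y}_\As=\ip{x}{by}_\As$, again an axiom of ${}_\Bs M_\As$. Matching the variances and the positions of the adjoints is the main bookkeeping obstacle here, but once the slots are lined up the required identities are literally those already granted. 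In the Hilbert (complete) case one then notes that $\iota$ interchanges the two norms isometrically, ${}_{M^*}\|\iota(x)\|=\|x\|_M$ and $\|\iota(x)\|_{M^*}={}_M\|x\|$, so completeness transfers and $M^*$ is a genuine Hilbert \cs-bimodule.

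Finally, for the universal property I would simply set $\Phi':=\Phi\circ\iota^{-1}$, i.e.~$\Phi'(\iota(x)):=\Phi(x)$, which is forced by the requirement $\Phi=\Phi'\circ\iota$ together with the bijectivity of $\iota$, so uniqueness comes for free. That $\Phi'$ is a homomorphism of bimodules is then read off from the anti-homomorphism property of $\Phi$: one computes $\Phi'(a\cdot\iota(x))=\Phi(xa^*)=a\Phi(x)=a\,\Phi'(\iota(x))$ and $\Phi'(\iota(x)\cdot b)=\Phi(b^*x)=\Phi(x)b=\Phi'(\iota(x))\,b$, using $\Phi(bxa)=a^*\Phi(x)b^*$ with one argument set to the identity. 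Thus the entire proposition rests on the conjugate-module construction, the only substantive work being the compatibility-axiom verification in the middle step.
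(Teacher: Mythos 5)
Your proof is correct and follows essentially the same route as the paper: the paper also realizes $M^*$ on the same underlying set as $M$ (with $\iota$ the identity map), defines the actions by $a\cdot x:=xa^*$ and $x\cdot b:=b^*x$, and settles the universal property exactly as you do, with $\Phi'$ determined by $\Phi'\circ\iota=\Phi$ and uniqueness forced by the bijectivity of $\iota$.

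The one point where you diverge is the definition of the inner products, and there your version is the more careful one. The paper sets $\ip{x}{y}'_\Bs:={}_\Bs\ip{x}{y}^*$ and ${}_\As\ip{x}{y}':=\ip{x}{y}_\As^*$, i.e.\ your formulas with the two arguments swapped, while you take $\ip{\iota(x)}{\iota(y)}_\Bs:={}_\Bs\ip{x}{y}$ and ${}_\As\ip{\iota(x)}{\iota(y)}:=\ip{x}{y}_\As$ with no adjoint. Under the paper's own conventions (right inner products $\Bs$-linear in the second slot, left inner products $\As$-linear in the first), your formulas are the consistent ones: one checks $\ip{\iota(x)}{\iota(y)\cdot b}_\Bs=\ip{\iota(x)}{\iota(b^*y)}_\Bs={}_\Bs\ip{x}{b^*y}={}_\Bs\ip{x}{y}\,b=\ip{\iota(x)}{\iota(y)}_\Bs\,b$, as required, whereas the paper's formula gives $\ip{x}{y\cdot b}'_\Bs={}_\Bs\ip{x}{b^*y}^*=b^*\,\ip{x}{y}'_\Bs$, i.e.\ $\Bs$-linearity in the first slot rather than the second. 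Since the two definitions differ only via the conjugate symmetry ${}_\Bs\ip{x}{y}^*={}_\Bs\ip{y}{x}$, this is a bookkeeping slip in the paper rather than a substantive gap, but your writeup lines up the slots correctly; your verification of the compatibility axioms and of the transfer of completeness (the two norms of $M^*$ being the two norms of $M$ interchanged) is likewise sound.
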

\begin{proof}
We take $M^*:=M$ as sets, but we define on $M^*$ the following bimodule structure:
\begin{gather*}
a\cdot x:=x a^*, \quad \forall x\in M^*=M, \quad \forall a\in \As,\\
x\cdot b:=b^* x, \quad \forall x\in M^*=M, \quad \forall b\in \Bs. 
\end{gather*}
It is easily checked that ${}_\As M^*_\Bs$ is a bimodule and that it becomes a (pre-)Hilbert \cs-bimodule if the inner products on $M^*$ are defined as follows:
\begin{gather*}
\ip{x}{y}'_\Bs:={}_\Bs\ip{x}{y}^*, \quad \forall x,y\in M^*, \\
{}_\As\ip{x}{y}':=\ip{x}{y}^*_\As, \quad \forall x,y\in M^*,
\end{gather*} 
where ${}_\As\ip{x}{y}'$ and $\ip{x}{y}'_\Bs$ denote the inner products on ${}_\As M^*_\Bs$. 

Clearly the identity map $\iota:M\to M^*$ is an anti-homomorphism of bimodules and for any anti-homomorphism of bimodules $\Phi:{}_\Bs M_\As \to {}_\As N_\Bs$, $\Phi':=\Phi$ is the unique homomorphism of bimodules $\Phi': {}_\As M^*_\Bs \to {}_\As N_\Bs$ such that $\Phi=\Phi'\circ \iota$. 
\end{proof}
The pair $(\iota,{}_\As M^*_\Bs)$ is unique up to isomorphism (as for any concept defined through a universal property) and is called the dual of the \hbox{(pre-)Hilbert} \cs-bimodule ${}_\Bs M_\As$.

\begin{definition}\label{def: m-p}
The \emph{Morita category} is the involutive category\footnote{By an \emph{involutive category} we mean a category $\Cf$ equipped with an involutive contravariant endofunctor  acting identically on the objects of $\Cf$ i.e.~a map $*:\Cf\to\Cf$ such that $(x^*)^*=x$ and $(x\circ y)^*=y^*\circ x^*$ for all $x,y\in \Cf$.} with objects the unital associative  rings, with  morphisms the isomorphism classes of bimodules, with composition the isomorphism classes of the tensor product of bimodules, and with involution given by isomorphism classes of the dual bimodules. 
The \emph{(algebraic) Picard groupoid} is the nerve of the Morita category\footnote{The nerve of a category is its class of invertible arrows.}. 
Two unital associative rings are \emph{Morita equivalent} if they are in the same orbit of the Picard groupoid. 
\end{definition}

Here we are interested only in the full subcategory of the Morita category whose objects are unital \cs-algebras. In this case, it is usually better to ``restrict'' also the family of allowed arrows as long as the new category preserves the notion of Morita equivalence i.e.~its nerve has the same orbits of the Picard groupoid.\footnote{There are also  interesting versions of Morita theory for involutive unital algebras (see P.~Ara~\cite{A} and H.~Bursztyn-S.~Waldmann~\cite{BW}).} 

The category described in the following definition is the \emph{Morita-Rieffel category} of unital \cs-algebras and it plays a key role in the discussion of the horizontal categorification of Gel'fand Theorem~\cite{BCL2}. 
\begin{definition}\label{def: mr-pr}
The \emph{Morita-Rieffel} category is the subcategory of the Morita category whose objects are unital \cs-algebras, whose arrows are the isomorphism classes of correspondences and whose composition is the Rieffel tensor product of correspondences.  
The nerve of this category is the (algebraic) \emph{Picard-Rieffel} groupoid. 
Two \cs-algebras in the the same orbit of the Picard-Rieffel groupoid are said to be 
\emph{strongly Morita equivalent}~\cite{Rie}. 
\end{definition} 
\begin{remark}
Note that the Morita-Rieffel category is not an involutive category (the substitution of bimodules with correspondences ``breaks the symmetry'' between left and right module structures). It is possible to eliminate this problem considering other subcategories of the Morita category. 
Two possible natural choices are the involutive subcategory of the Morita category  consisting of isomorphism classes of (pre-)Hilbert \cs-bimodules or (whenever it is necessary to have a unique Banach norm and a unique notion of adjoint of a morphism of the bimodules involved) the subcategory consisting of full Hilbert \cs-bimodules such that property~\eqref{eq: f-o-b} is satisfied. 
In these cases the involution is given by the Rieffel dual of the bimodules. 
\end{remark}

The following proposition is a well-known result (see e.g.~\cite[Section~8.8]{GMS} for a review).

\begin{proposition}
Two unital \cs-algebras $\As$ and $\Bs$ are Morita equivalent if and only if there exists an imprimitivity bimodule ${}_\As M_\Bs$. The Picard-Rieffel groupoid consists of isomorphism classes of imprimitivity Hilbert \cs-bimodules. Moreover,
the notions of Morita equivalence and strong Morita equivalence coincide. 
\end{proposition}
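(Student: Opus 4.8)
The plan is to prove the three assertions at once by identifying the invertible arrows of both the Morita category and the Morita-Rieffel category with isomorphism classes of imprimitivity bimodules. The easy half of this identification is to check that every imprimitivity bimodule is invertible with inverse its Rieffel dual; the hard half is the converse, that every invertible bimodule between C*-algebras already carries (up to isomorphism) the structure of an imprimitivity bimodule.

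First I would treat the elementary direction. Given an imprimitivity bimodule ${}_\As M_\Bs$, let ${}_\Bs M^*_\As$ be its Rieffel dual from Proposition~\ref{def: ri-dual}, with canonical anti-homomorphism $\iota$. I would show that the maps
\begin{gather*}
\Phi\colon M\otimes_\Bs M^*\to {}_\As\As_\As, \quad x\otimes \iota(y)\mapsto {}_\As\ip{x}{y}, \\
\Psi\colon M^*\otimes_\As M\to {}_\Bs\Bs_\Bs, \quad \iota(x)\otimes y\mapsto \ip{x}{y}_\Bs,
\end{gather*}
are well defined on the balanced tensor products (this is exactly the content of the compatibility axiom ${}_\As\ip{xb}{y}={}_\As\ip{x}{yb^*}$ for $\Phi$ and of \eqref{eq: c} for $\Psi$), are morphisms of bimodules, and are surjective by fullness. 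Using Lemma~\ref{lem: part} I may write $1_\As=\sum_k {}_\As\ip{x_k}{y_k}$ and $1_\Bs=\sum_k\ip{u_k}{v_k}_\Bs$ as genuine finite sums, and the imprimitivity identity~\eqref{eq: imp} then lets one produce explicit inverses on elementary tensors, so that $\Phi$ and $\Psi$ are isomorphisms. This exhibits $M$ as an invertible arrow in both categories; since an imprimitivity bimodule is in particular a correspondence, it determines an element of the Picard-Rieffel groupoid, and its existence implies both Morita and strong Morita equivalence.

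The converse is the main obstacle. Suppose $\As$ and $\Bs$ are Morita equivalent as rings, so there is an invertible bimodule ${}_\As P_\Bs$. Classical ring-theoretic Morita theory forces $P$ to be finitely generated projective and a generator as a right $\Bs$-module, and yields a ring isomorphism $\As\cong\End_\Bs(P)$ via the left action. The step where analysis must enter is the passage from this purely algebraic datum to a C*-structure: a finitely generated projective module over the unital C*-algebra $\Bs$ is a direct summand of some $\Bs^n$ and hence carries a $\Bs$-valued inner product making it a right Hilbert C*-module, for which every algebraic endomorphism is automatically adjointable and compact, so that $\End_\Bs(P)=\Ks(P)$. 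The genuinely delicate point---this is essentially Beer's theorem, cf.~\cite[Section~8.8]{GMS}---is to verify that the resulting identification $\As\cong\Ks(P)$ is an isomorphism of C*-algebras (i.e.~$*$-preserving and isometric) and not merely of rings, which requires choosing the inner product compatibly and is where the involutive structure is really used.

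Once $\As\cong\Ks(P)$ holds as C*-algebras, I would finish exactly as in Proposition~\ref{prop: imp-a-b}: transporting the structure, I define the left action of $\As$ through this isomorphism and set ${}_\As\ip{x}{y}:=\theta_{x,y}$. Then the identity $\theta_{x,y}(z)=x\ip{y}{z}_\Bs$ becomes precisely the imprimitivity condition~\eqref{eq: imp}; right fullness follows because $P$ is a generator over $\Bs$, and left fullness because the finite-rank operators $\theta_{x,y}$ generate $\Ks(P)\cong\As$. Hence ${}_\As P_\Bs$ is an imprimitivity bimodule, which completes the equivalence. Combining the two directions gives all three claims: the biconditional of the statement, the coincidence of Morita and strong Morita equivalence (the same upgrading argument applies verbatim to an invertible \emph{correspondence}, which is a fortiori an invertible bimodule), and therefore the identification of the Picard-Rieffel groupoid with the isomorphism classes of imprimitivity Hilbert C*-bimodules.
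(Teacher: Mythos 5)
Your proof is correct and, in its substantive direction, follows essentially the same route as the paper: classical Morita theory forces an invertible bimodule ${}_\As P_\Bs$ to be finitely generated projective (the paper cites \cite[Theorem~10.4.3]{GMS}), one equips $P$ with a $\Bs$-valued inner product as a direct summand of $\Bs^n$, identifies $\As$ with $\Ks(P)=\End_\Bs(P)$, and reads off the imprimitivity condition~\eqref{eq: imp} from $\theta_{x,y}(z)=x\ip{y}{z}_\Bs$, exactly in the spirit of Proposition~\ref{prop: imp-a-b}. Where you differ is in completeness rather than in method, and the differences are in your favor. First, you prove the easy direction explicitly, exhibiting the Rieffel dual of Proposition~\ref{def: ri-dual} as a two-sided inverse via the evaluation maps $\Phi$ and $\Psi$, with explicit inverses built from Lemma~\ref{lem: part} and~\eqref{eq: imp}; the paper leaves this direction entirely implicit. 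Second, and more valuably, you isolate the genuinely delicate analytic point that the paper's one-line ``hence ${}_\As M_\Bs$ must be an imprimitivity bimodule'' conceals: the purely algebraic ring isomorphism $\As\cong\End_\Bs(P)$ must be upgraded to a $*$-isomorphism of \cs-algebras onto $\Ks(P)$ (Beer's theorem), which requires choosing the inner product compatibly and is where the involutive structure actually enters. Like the paper, you cite this point (via \cite[Section~8.8]{GMS}) rather than prove it, so neither argument is self-contained there; but your version makes the logical dependence transparent, whereas the paper's ``hence'' silently absorbs it.
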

\begin{proof} 
If $\As$ and $\Bs$ are Morita equivalent, there exists bimodules ${}_\As M_\Bs$ and 
${}_\Bs N_\As$ such that $M\otimes_\Bs N \simeq \As$ and $N\otimes_\As M\simeq \Bs$. 
Any bimodule ${}_\As M_\Bs$ with the previous properties is necessarily finite projective~\cite[Theorem~10.4.3]{GMS}. 
Any finite projective right module can be equipped with an inner product that makes it a correspondence from $\As$ to $\Bs$ and hence ${}_\As M_\Bs$ must be an imprimitivity bimodule. 
\end{proof}
\subsection{Imprimitivity Bimodules on Abelian \cs-algebras.}\label{sec: imp-bim}

It is well-known that in some cases imprimitivity bimodules can be used to construct explicit isomorphisms between the associated 
\cs-algebras, see e.g.~\cite[Lemma~10.19]{Bo}. 
In this subsection we follow a similar route, recovering and further elaborating on a ``classical'' result~\cite[Theorem 3.1 and Corollary 3.3]{Rie3} that is certainly folklore among specialists. 
For the sake of self-containment we present a full account of the situation at hand.

\medskip

The following theorem is motivated by P.~Ara~\cite[Theorem~4.2]{A}. 

\begin{theorem}\label{lemma: phiM}
Let ${}_\As M_\Bs$ be an $\As$-$\Bs$ imprimitivity bimodule, where $\As$ and $\Bs$ are commutative unital \cs-algebras. 
Then there exists a unique canonical isomorphism $\phi_M:\As\to \Bs$ such that: 
\begin{equation} \label{eq: isomorphism}
\phi_M({}_\As\ip{x}{y})=\ip{y}{x}_\Bs, \quad \forall x,y\in M. 
\end{equation}
Moreover the canonical isomorphism $\phi_M$ satisfies the following property:
\begin{equation}
a\cdot x=x\cdot \phi_M(a), \quad \forall x\in M, \ \forall a\in \As. 
\end{equation}
\end{theorem}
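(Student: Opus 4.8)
The plan is to reduce the whole statement to the single intrinsic identity
\begin{equation*}
{}_\As\ip{u}{v}\cdot w = w\cdot\ip{v}{u}_\Bs,\qquad\forall u,v,w\in M,\tag{$\star$}
\end{equation*}
and then read off everything from it. Granting $(\star)$: since $M$ is left-full, the left analogue of Lemma~\ref{lem: part} writes every $a\in\As$ as a \emph{finite} sum $a=\sum_i{}_\As\ip{x_i}{y_i}$, so \eqref{eq: isomorphism} forces $\phi_M(a):=\sum_i\ip{y_i}{x_i}_\Bs$ and thereby pins $\phi_M$ down uniquely. For well-definedness I must check that $\sum_i{}_\As\ip{x_i}{y_i}=0$ gives $\beta:=\sum_i\ip{y_i}{x_i}_\Bs=0$: for any $z\in M$, using \eqref{eq: imp}, then $(\star)$ with $(u,v,w)=(z,y_i,x_i)$, then \eqref{eq: imp} again, $z\beta=\sum_i{}_\As\ip{z}{y_i}x_i=\sum_i x_i\ip{y_i}{z}_\Bs=\big(\sum_i{}_\As\ip{x_i}{y_i}\big)z=0$; as this holds for all $z$ and $M_\Bs$ is full, $\beta=0$. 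The second identity $a\cdot x=x\cdot\phi_M(a)$ follows the same way: $ax=\sum_i x_i\ip{y_i}{x}_\Bs$ while $x\phi_M(a)=\sum_i{}_\As\ip{x}{y_i}x_i$, and these agree by $(\star)$ with $(u,v,w)=(x,y_i,x_i)$. From this second identity, multiplicativity, involutivity and unitality of $\phi_M$ are formal (e.g. $x\phi_M(ab)=(ab)x=x\phi_M(a)\phi_M(b)$ for all $x$, then cancel by right-fullness); injectivity is faithfulness of the left action (Proposition~\ref{prop: imp-a-b}), and surjectivity holds because $\phi_M(\As)$ contains all $\ip{y}{x}_\Bs$, which span $\Bs$ by Lemma~\ref{lem: part}.

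Next I would prove $(\star)$, first reducing it to the rank-one relation
\begin{equation*}
\ip{p}{u}_\Bs\,\ip{v}{w}_\Bs=\ip{v}{u}_\Bs\,\ip{p}{w}_\Bs,\qquad\forall p,u,v,w\in M.\tag{R}
\end{equation*}
Indeed, using \eqref{eq: imp} one has $\ip{p}{{}_\As\ip{u}{v}w}_\Bs=\ip{p}{u}_\Bs\ip{v}{w}_\Bs$ and $\ip{p}{w\ip{v}{u}_\Bs}_\Bs=\ip{p}{w}_\Bs\ip{v}{u}_\Bs$; since $\Bs$ is commutative, $(\mathrm{R})$ says these coincide, so $\ip{p}{\,{}_\As\ip{u}{v}w-w\ip{v}{u}_\Bs\,}_\Bs=0$ for every $p$, and taking $p$ equal to the difference yields $(\star)$ by positivity of the $\Bs$-valued inner product (the converse implication is immediate).

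To establish $(\mathrm{R})$ I would localise on $\Bs\cong C(Y)$, $Y$ compact Hausdorff. For a character $\chi\in Y$ with maximal ideal $\I_\chi:=\ker\chi$, Proposition~\ref{prop: quot-mod} makes $M^\chi:=M/(M\I_\chi)$ a full right Hilbert module over $\Bs/\I_\chi\cong\CC$, i.e.\ a Hilbert space with $\langle\bar p,\bar u\rangle:=\chi(\ip{p}{u}_\Bs)$. Applying $\chi$ to $(\mathrm{R})$ turns it into $\langle\bar p,\bar u\rangle\langle\bar v,\bar w\rangle=\langle\bar v,\bar u\rangle\langle\bar p,\bar w\rangle$, which holds for all vectors exactly when $\dim_\CC M^\chi\le 1$. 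To get this bound, note the quotient map sends $\theta_{x,y}\mapsto\theta_{\bar x,\bar y}$ and extends to a surjective $*$-homomorphism $\Ks(M_\Bs)\to\Ks(M^\chi)$; by Proposition~\ref{prop: imp-a-b} the source $\Ks(M_\Bs)\cong\As$ is commutative, hence so is its image $\Ks(M^\chi)$, which is the full algebra of compact operators on the Hilbert space $M^\chi$ and is commutative only if $\dim M^\chi\le 1$. Thus $(\mathrm{R})$ holds at every $\chi$, i.e.\ in $C(Y)=\Bs$, which completes the proof of $(\star)$ and hence of the theorem.

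The main obstacle is precisely this last step. The identities $(\star)$/$(\mathrm{R})$ are \emph{false} for general imprimitivity bimodules (already for $M=\As=\Bs=M_n(\CC)$ with ${}_\As\ip{x}{y}=xy^*$, $\ip{x}{y}_\Bs=x^*y$), so this is the only place where both commutativity hypotheses are genuinely used: commutativity of $\Bs$ allows the fibrewise evaluation, and commutativity of $\Ks(M_\Bs)\cong\As$ forces each fibre to be at most one-dimensional. The delicate technical points to verify carefully are the compatibility $\Ks(M/M\I_\chi)\cong\Ks(M_\Bs)/(\text{ideal})$ under the quotient and the identification of $\Ks$ of a (full, nonzero) Hilbert space with its entire algebra of compact operators.
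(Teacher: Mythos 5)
Your proof is correct, and it takes a genuinely different route from the paper's. The paper argues globally and purely algebraically: fixing a finite partition of unity $\sum_j\ip{w_j}{z_j}_\Bs=1_\Bs$ supplied by right-fullness, it \emph{defines} $\phi_M(a):=\sum_j\ip{w_j}{az_j}_\Bs$, verifies well-definedness, multiplicativity, involutivity and invertibility by direct computations in which commutativity of $\As$ and of $\Bs$ is used to reorder factors, and only at the end establishes \eqref{eq: isomorphism}: it shows $\phi_M({}_\As\ip{x}{y})=\ip{y}{\alpha x}_\Bs$ for the element $\alpha:=\sum_j{}_\As\ip{z_j}{w_j}$ and then kills this discrepancy by spectral theory ($\alpha$ is positive of norm $\le 1$ and invertible with positive inverse of norm $\le 1$, hence $\alpha=1_\As$). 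You instead isolate the intrinsic identity $(\star)$ first, prove it by localization at the characters of $\Bs$ --- each fibre $M^{\chi}=M/(M\ker\chi)$ is a Hilbert space whose compacts form a quotient of the commutative algebra $\Ks(M_\Bs)\cong\As$ of Proposition~\ref{prop: imp-a-b}, hence has dimension at most one --- and then obtain $\phi_M$, its uniqueness, and both displayed identities by formal bookkeeping. Your two flagged technical points do check out: $T\mapsto\cj{T}$ is a unital $*$-homomorphism $\End(M_\Bs)\to\End(M^{\chi})$ (adjointable maps are $\Bs$-linear, hence preserve $M\ker\chi$), its restriction to $\Ks(M_\Bs)$ has closed image that contains every $\theta_{\cj{x},\cj{y}}$ and is contained in $\Ks(M^{\chi})$ by continuity, so the image is exactly $\Ks(M^{\chi})$; and $\Ks$ of a Hilbert \cs-module over $\CC$ is the usual algebra of compact operators, which is commutative only in dimension $\le 1$. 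As for what each route buys: yours exposes the geometric mechanism --- fibres of dimension at most one --- thereby anticipating the Hermitian line bundle of Theorem~\ref{th: impri} and confining the use of both commutativity hypotheses to a single transparent step; the paper's route is Gel'fand-free and completely elementary (finite sums and \cs-identities only), and its by-products --- the explicit formula \eqref{eq: phi-def}, the element $\alpha$, and the identities \eqref{eq: ax} and \eqref{eq: axb} --- are reused later, e.g.\ in Propositions~\ref{prop: quot-impri}, \ref{prop: 3prop} and \ref{prop: omega-phi}.
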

\begin{proof}
The uniqueness of the map follows from the fullness of the left Hilbert C*-module ${}_\As M$.
By the fullness of the right Hilbert C*-module $M_\Bs$ we can write $1_\Bs$ as a finite sum 
$1_\Bs=\sum_{j=1}^n \ip{w_j}{z_j}_\Bs,$ where $w_j,z_j\in M$, $j = 1,\dots,n$. 
For any $a \in \As$, define
\begin{equation}\label{eq: phi-def}
   \phi_M(a) = \sum_{j=1}^n \ip{w_j}{a z_j}_\Bs, 
\end{equation}
where $w_j,z_j\in M$ are such that $\sum_{j=1}^n \ip{w_j}{z_j}_\Bs=1_\Bs$. 

To show that $\phi_M$ is well-defined, let $w_j, z_j$ and $x_k, y_k$ be two pairs of finite sequences such that 
$\sum_j\ip{w_j}{z_j}_\Bs=1_\Bs$ and $\sum_k\ip{x_k}{y_k}_\Bs=1_\Bs$.
Write $b=\sum_j\ip{w_j}{a z_j}_\Bs$.  Then
\begin{align*}
\ip{x_k}{y_k}\!_\Bs\, b & 
	= \ip{x_k}{y_k}_\Bs \sum_j\ip{w_j}{az_j}_\Bs  \\
& = \sum_{j}\ip{x_k}{y_k\ip{w_j}{az_j}_\Bs}_\Bs = \sum_{j}\ip{x_k}{{}_\As\ip{y_k}{w_j}\,az_j}_\Bs \\
& = \sum_{j}\ip{x_k}{a\,{}_\As\!\ip{y_k}{w_j}z_j}_\Bs = \sum_{j}\ip{x_k}{ay_k\ip{w_j}{z_j}_\Bs}_\Bs \\
& = \ip{x_k}{ay_k}_\Bs.
\end{align*}
It follows that $b = \sum_k \ip{x_k}{ay_k}_\Bs$, which shows that $\phi_M(a)$ is well-defined.

We now show that $\phi_M$ is a homomorphism of algebras. Clearly $\phi_M$ is additive and $\CC$-linear. The multiplicativity follows from:
\begin{align*}
\phi_M(a)&\cdot \phi_M(a') = \sum_j\ip{w_j}{az_j}_\Bs \sum_k\ip{w'_k}{a'z'_k}_\Bs \\
	& = \sum_{j,k} \ip{w_j}{az_j\ip{w'_k}{a'z'_k}_\Bs}_\Bs 
			= \sum_{j,k} \ip{w_j}{a{}_\As\ip{z_j}{w'_k}a'z'_k}_\Bs \\
	& = \sum_{j,k} \ip{w_j}{{}_\As\!\ip{z_j}{w'_k}aa'z'_k}_\Bs 
			= \sum_{j,k} \ip{w_j}{z_j\ip{w'_k}{aa'z'_k}_\Bs}_\Bs \\
	& = \sum_{j,k} \ip{w_j}{z_j}_\Bs\ip{w'_k}{aa'z'_k}_\Bs 
			= \sum_{k} \ip{w'_k}{aa'z'_k}_\Bs = \phi_M(aa'). 
\end{align*}
Of course $\phi_M$ is unital: $\phi_M(1_\As)=\sum_j\ip{w_j}{1_\As z_j}_\Bs =\sum_j\ip{w_j}{z_j}_\Bs=1_\Bs$.
To prove the involutivity of $\phi_M$, note that if $\sum_j\ip{w_j}{z_j}_\Bs=1_\Bs$, taking the adjoints, we also have  $\sum_j\ip{z_j}{w_j}_\Bs=1_\Bs$. Hence 
\begin{equation*}
\phi_M(a^*)	= \sum_j\ip{w_j}{a^*z_j}_\Bs = \sum_j\ip{aw_j}{z_j}_\Bs 
						 = \sum_j\ip{z_j}{aw_j}^*_\Bs  = \phi_M(a)^*.
\end{equation*}

Similarly, there is a canonical homomorphism $\psi_M: \Bs\to \As$ defined by: 
\begin{equation*}
\psi_M(b):=\sum_i{}_\As\ip{t_ib}{u_i} \quad \forall b\in \Bs,
\end{equation*}
where $t_i,u_i\in M$ is a pair of finite sequences such that $\sum_i{}_\As\ip{t_i}{u_i}=1_\As$. Then
\begin{align*}
\psi_M(\phi_M(a)) & = \sum_i {}_\As\ip{t_i\phi_M(a)}{u_i} \\
									& = \sum_{i,j} {}_\As\ip{t_i\ip{w_j}{az_j}_\Bs}{u_i} 
										= \sum_{i,j} {}_\As\ip{{}_\As\ip{t_i}{w_j}az_j}{u_i} \\
									& = \sum_{i,j} a{}_\As\ip{t_i\ip{w_j}{z_j}_\Bs}{u_i} 
										= \sum_{i} a{}_\As\ip{t_i}{u_i} = a. 
\end{align*}
By the same argument, we can show that $\phi_M(\psi_M(b))=b$ for all $b\in \Bs$. Hence $\psi_M$ is the inverse of $\phi_M$, 
which implies that $\phi_M$ is an isomorphism.

To establish \eqref{eq: isomorphism}, let $w_j,z_j\in M$ be finite sequences such that $\sum_j\ip{w_j}{z_j}_\Bs=1_\Bs$. 
Define $\alpha:=\sum_j{}_\As\ip{z_j}{w_j}$ and note that 
\begin{equation} \label{eq: defining alpha}
\phi_M({}_\As\ip{x}{y})=\ip{y}{\alpha x}_\Bs, \quad \forall x,y\in M,
\end{equation}
which follows from this computation:
\begin{align*}
\phi_M({}_\As\ip{x}{y}) &= \sum_j\ip{w_j}{{}_\As\ip{x}{y}z_j}_\Bs
		= \sum_j\ip{w_j}{x\ip{y}{z_j}_\Bs}_\Bs \\
&= \sum_j\ip{w_j}{x}_\Bs \ip{y}{z_j}_\Bs 
		= \sum_j \ip{y}{z_j}_\Bs \ip{w_j}{x}_\Bs  \\
&= \sum_j \ip{y}{z_j\ip{w_j}{x}_\Bs}_\Bs 
		=\sum_j \ip{y}{{}_\As\ip{z_j}{w_j}x}_\Bs \\
&=  \ip{y}{\sum_j{}_\As\ip{z_j}{w_j}x}_\Bs 
		= \ip{y}{\alpha x}_\Bs.
\end{align*}
The element $\alpha\in \As$ is independent from the choice of the finite sequences $w_j,z_j\in M$ such that $\sum_j\ip{w_j}{z_j}_\Bs=1_\Bs$. In fact, given another pair of finite sequences $w_i',z_i'\in M$ such that $\sum_i\ip{w_i'}{z_i'}_\Bs=1_\Bs$, we see that 
$\phi_M({}_\As\ip{x}{y})=\ip{y}{\alpha'x}_\Bs$, where $\alpha':=\sum_i{}_\As\ip{z_i'}{w_i'}$ so that $\ip{y}{\alpha x}_\Bs=\ip{y}{\alpha' x}_\Bs$ for all $x,y\in M$ that implies immediately $(\alpha-\alpha')x=0_M$ that (by the fulless of the module ${}_\As M$) implies $\alpha'=\alpha$. 

We see that $\alpha$ is Hermitian because for all $x,y\in M$:
\begin{align*}
\ip{x}{\alpha y}_\Bs &= \phi_M({}_\As\ip{y}{x}) = \phi_M({}_\As\ip{x}{y}^*) \\
		&= \phi_M({}_\As\ip{x}{y})^* = \ip{y}{\alpha x}_\Bs^*=\ip{\alpha x}{y}_\Bs 
		= \ip{x}{\alpha^*y}_\Bs, 
\end{align*}
which implies that $\alpha=\alpha^*$. 

We can actually prove that $\alpha\in\As$ is positive. 
Since $\phi_M:\As\to \Bs$ is an isomorphism, the map 
$(x,y)\mapsto \phi_M({}_\As\ip{x}{y})=\ip{y}{\alpha x}_\Bs$ is a $\Bs$-valued inner product on $M$. Hence $\phi_M({}_\As\ip{x}{x})=\ip{x}{\alpha x}_\Bs$ is a positive element in $\Bs$ for all $x\in M$. 
Considering the positive and negative parts of the Hermitian element $\alpha$, i.e.~the unique pair of positive elements $\alpha_+,\alpha_-\in \As_+$ such that $\alpha=\alpha_+-\alpha_-$ with $\alpha_+\alpha_-=0_\As$, we see that 
\begin{equation*}
\ip{x}{\alpha_+ x}_\Bs -\ip{x}{\alpha_- x}_\Bs \in \Bs_+, \quad \forall x\in M.
\end{equation*}
From the calculation below,  
\begin{align*}
\ip{x}{\alpha_+x}_\Bs\ip{x}{\alpha_-x}_\Bs &= \ip{x}{\alpha_+x\ip{x}{\alpha_-x}_\Bs}_\Bs \\
&= \ip{x}{\alpha_+{}_\As\ip{x}{x}\alpha_-x}_\Bs= \ip{x}{\alpha_+\alpha_-{}_\As\ip{x}{x}x}_\Bs \\
&=\ip{x}{0_\As{}_\As\ip{x}{x}x}_\Bs= 0_\Bs, 
\end{align*}
it follows that the positive terms 
$\ip{x}{\alpha_\pm x}_\Bs=\ip{\alpha_\pm^{1/2}x}{\alpha_\pm^{1/2}x}_\Bs$ are the positive and negative parts of the positive element $\ip{x}{\alpha x}_\Bs$. 
Therefore $\ip{x}{\alpha_-x}_\Bs=0_\Bs$ for all $x\in M$, and thus 
$\alpha_-=0_\As$, and so $\alpha$ is positive. 

Next we prove that $\|\alpha\|_\As\leq1$. 
Consider the operator $T_\alpha: M_\Bs\to M_\Bs$ given by 
\begin{equation*}
T_\alpha (x):=\alpha\cdot x, \quad \forall x\in M  
\end{equation*} 
and note that $\|T_\alpha\|\leq 1$ because, for all $x\in M$, 
\begin{align*} 
\|T_\alpha(x)\|^2 &= \|\ip{T_\alpha(x)}{T_\alpha (x)}_\Bs\| = 
\|\ip{T_\alpha(x)}{\alpha x}_\Bs\|=\|\phi_M({}_\As\ip{T_\alpha(x)}{x})\|  \\
&= \|{}_\As\ip{T_\alpha(x)}{x}\|\leq\|T_\alpha(x)\|\cdot\|x\|.  
\end{align*} 
By proposition~\ref{prop: imp-a-b}, the map $T\colon \As\to \Ks(M_\Bs)$, $\alpha \mapsto T_\alpha$, is an isomorphism from $\As$ onto 
the \cs-algebra of compact operators $\Ks(M_\Bs)$. Thus
\begin{equation*}
\|\alpha\|=\|T_\alpha\|\leq 1, \quad \forall\alpha\in \As.
\end{equation*}

In a completely similar way, we can find a positive Hermitian element $\beta \in \Bs$ such that $\|\beta\| \le 1$ and that
\begin{equation} \label{eq: defining beta}
\psi_M(\ip{x}{y}_\Bs)={}_\As\ip{y\beta}{x}, \quad \forall x,y\in M. 
\end{equation}

The two elements $\alpha$ and $\beta$ are related by
$\phi_M(\alpha)\beta=1_\Bs$ and $\psi_M(\beta)\alpha=1_\As.$
In order to prove this, we first note that 
\begin{equation}\label{eq: ax}
x\cdot \phi_M(a) = a \cdot x, \quad \forall x\in M, \quad \forall a\in \As.
\end{equation}
In fact, if $w_j,z_j\in M$ is a pair of sequences such that $\sum_j\ip{w_j}{z_j}_\Bs=1_\Bs$, equation~\eqref{eq: ax} follows from this direct computation:
\begin{align*}
x\cdot\phi_M(a)&=x \sum_j\ip{w_j}{a z_j}_\Bs=\sum_j{}_\As\ip{x}{w_j}\, a z_j \\ 	
					&=\sum_ja\, {}_\As\!\ip{x}{w_j} z_j = \sum_j a x \ip{w_j}{z_j}_\Bs = a\cdot x.
\end{align*}
Next we see that 
\begin{equation}\label{eq: axb}
\alpha\cdot x\cdot \beta=x, \quad \forall x\in M. 
\end{equation}
To see this, we apply \eqref{eq: defining alpha} and \eqref{eq: defining beta} to the following calculation:
\begin{align*}
  \ip{\alpha\cdot x\cdot \beta}{y}_\Bs = \ip{x\cdot \beta}{\alpha\cdot y}_\Bs &= \phi_M({}_\As\ip{y}{x \cdot \beta}) 
  =  \phi_M({}_\As\ip{y \cdot \beta}{x}) \\
  &= \phi_M(\psi_M(\ip{x}{y}_\Bs) = \ip{x}{y}_\Bs.
\end{align*}
From~\eqref{eq: ax} and~\eqref{eq: axb}, we obtain $x\phi_M(\alpha)\beta=x$ for all $x \in M$, which 
implies $\phi_M(\alpha)\beta=1_\Bs$, by the fullness of the module $M_\Bs$. Similarly, we have $\psi_M(\beta)\alpha=1_\As$.

It follows that $\alpha$ and $\beta$ are invertible and $\|\alpha^{-1}\|=\|\psi_M(\beta)\|=\|\beta\|\leq 1$.  
Since $\alpha$ and $\alpha^{-1}$ are positive elements with norm no larger than one in the commutative \cs-algebra $\As$, 
we have $\alpha=1_\As$.
\end{proof}

\begin{definition}
Let ${}_\As M$ be a left module over an algebra $\As$ and denote by $\As^\circ$ the opposite algebra\footnote{Recall that the opposite algebra $\As^\circ$ of an algebra $\As$ is just the vector space $\As$ equipped with the multiplication $a\cdot_{\As^\circ} b:=b\cdot_\As a$.} 
of $\As$. The \emph{right symmetrized bimodule} of ${}_\As M$ is the $\As$-$\As^\circ$ bimodule ${}_\As M^s_{\As^\circ}$ with right multiplication defined by:
\begin{equation*}
x\cdot a :=ax, \quad \forall x\in M, \ \forall a\in \As. 
\end{equation*}
In a similar way, given a right module $M_\As$, we define its 
\emph{left symmetrized bimodule} ${}_{\As^\circ}\!\!\!{}^{\ s} M_\As$ via the left multiplication given by $a\cdot x:=xa$ for all $x\in M$ and $a\in \As$. 
\end{definition}
In the case of a commutative algebra $\As$, the opposite algebra $\As^\circ$ coincides with $\As$ and the left (respectively right) symmetrized of a module is clearly a symmetric bimodule over $\As$. 
\begin{proposition}
Suppose that ${}_\As M_\Bs$ is an imprimitivity $\As$-$\Bs$-bimodule over two unital commutative \cs-algebras $\As$ and $\Bs$. Let $\phi_M: \As\to \Bs$ be the canonical isomorphism defined in theorem~\ref{lemma: phiM}. 

The bimodule ${}_\As M_{\phi_M}$ coincides with the right symmetrized bimodule 
${}_\As M^s\!\!{}_\As$. 

The bimodule ${}_{\phi_M^{-1}} M_\Bs$ coincides with the left symmetrized bimodule 
${}_\Bs\!{}^s\! M_\Bs$. 
\end{proposition}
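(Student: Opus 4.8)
The plan is to observe that both assertions are nothing more than a restatement of the identity
\begin{equation*}
a \cdot x = x \cdot \phi_M(a), \quad \forall x \in M, \ \forall a \in \As,
\end{equation*}
recorded as equation~\eqref{eq: ax} and in the statement of Theorem~\ref{lemma: phiM}, once the two module structures appearing in each claim are written out explicitly. First I would unwind the definitions for the first assertion. By Definition~\ref{def: twist}, the right twisted module $M_{\phi_M}$ has underlying set $M$, the original left $\As$-action, and right $\As$-action $x \cdot a := x \cdot \phi_M(a)$; on the other hand the right symmetrized bimodule ${}_\As M^s{}_\As$ has the same underlying set and left $\As$-action, but right $\As$-action $x \cdot a := a x$. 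Thus the two bimodules share the same underlying set and the same left structure, and they coincide precisely when $x \cdot \phi_M(a) = a x$ for all $x \in M$ and $a \in \As$, which is exactly~\eqref{eq: ax}.

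For the second assertion I would proceed symmetrically. The left twisted module ${}_{\phi_M^{-1}} M$, formed from the left $\As$-module ${}_\As M$ via the unital homomorphism $\phi_M^{-1}\colon \Bs \to \As$, carries the left $\Bs$-action $b \cdot x := \phi_M^{-1}(b) \cdot x$ together with the original right $\Bs$-action; the left symmetrized bimodule ${}_\Bs{}^s M_\Bs$ carries that same right $\Bs$-action and the left action $b \cdot x := x b$. Since the right structures and the underlying sets agree, the claim reduces to $\phi_M^{-1}(b) \cdot x = x \cdot b$ for all $b \in \Bs$ and $x \in M$. Substituting $b = \phi_M(a)$ into~\eqref{eq: ax} yields $\phi_M^{-1}(b) \cdot x = a \cdot x = x \cdot \phi_M(a) = x \cdot b$, which is precisely this identity.

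There is no genuine obstacle here: all the content sits in~\eqref{eq: ax}, and the proposition is essentially a dictionary entry translating that single identity into the language of twisted and symmetrized bimodules. The only points deserving minor care are (i) checking that $\phi_M$ and $\phi_M^{-1}$ are unital ring homomorphisms, so that the twisting operations of Definition~\ref{def: twist} are legitimate — this is supplied by Theorem~\ref{lemma: phiM}, which asserts that $\phi_M$ is a unital isomorphism — and (ii) using the commutativity of $\As$ and $\Bs$ (so that $\As^\circ = \As$ and $\Bs^\circ = \Bs$), as noted just before the proposition, to ensure the symmetrized objects are genuinely two-sided modules over a single algebra. I would close by emphasizing that both conclusions are equalities of bimodules in the strict sense, with identical underlying sets and operations, rather than mere isomorphisms.
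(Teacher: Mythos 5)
Your handling of the module actions is exactly the paper's: both assertions, at the level of actions, reduce to equation~\eqref{eq: ax}, and your symmetric treatment of the second claim is fine. But there is a genuine gap: you assert that ``all the content sits in~\eqref{eq: ax}'' and treat the proposition as a purely algebraic dictionary entry, whereas the objects being compared are Hilbert \cs-bimodules, not bare bimodules. By remark~\ref{rem: twist}, the twisted module ${}_\As M_{\phi_M}$ carries a right $\As$-valued inner product, namely
\begin{equation*}
\ip{x}{y}:=\phi_M^{-1}(\ip{x}{y}_\Bs), \quad \forall x,y\in M,
\end{equation*}
while the right symmetrized bimodule ${}_\As M^s\!\!{}_\As$ carries the inner product inherited from the left Hilbert \cs-module ${}_\As M$. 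For the two structures to \emph{coincide} (not merely to agree as one-sided modules) you must also check that these inner products are equal, i.e.\ that $\phi_M^{-1}(\ip{x}{y}_\Bs)={}_\As\ip{y}{x}$ for all $x,y\in M$. This is precisely equation~\eqref{eq: isomorphism}, the defining property of $\phi_M$ in theorem~\ref{lemma: phiM}, and the paper's proof explicitly devotes a step to it (together with its analogue for the left symmetrized bimodule ${}_\Bs\!{}^s\!M_\Bs$).

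The omission is easily repaired --- one line invoking~\eqref{eq: isomorphism} closes it --- but as written your argument establishes only the coincidence of the module actions; it does not prove equality of Hilbert \cs-bimodules, which is the content the paper actually needs (for instance in proposition~\ref{prop: quot-impri} and in the spectral theorem, where the identification must respect the inner products). Note also that the relevant identity is not a consequence of~\eqref{eq: ax} alone: \eqref{eq: ax} controls how $\phi_M$ intertwines the actions, while~\eqref{eq: isomorphism} controls how it intertwines the two inner products, and in the paper's proof of theorem~\ref{lemma: phiM} these are established by separate computations.
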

\begin{proof}
Take $x\in M$ and $a\in \As$. We already proved in~\eqref{eq: ax} that 
$x\cdot \phi_M(a) = a \cdot x$, for all $x\in M$ and for all $a\in \As$. 

The second part of the proposition $x\cdot b=\phi_M^{-1}(b)\cdot x$ is completed with an exactly similar argument. 

In order to complete the proof, we have to show that the inner products on the right $\phi_M$-twisted bimodule ${}_\As M_{\phi_M}$ coincides with the inner products of the right symmetrized bimodule ${}_\As M^s\!\!{}_\As$ and this is precisely equation \eqref{eq: isomorphism}. 

A similar argument applies to the case of the left symmetrized bimodule ${}_\Bs\!{}^s\! M_\Bs$ and the left $\phi_M$-twisted bimodule ${}_{\psi_M} M_\Bs$. 
\end{proof}

The imprimitivity condition also behaves naturally under quotients.

\begin{proposition}\label{prop: quot-impri}
Let ${}_\As M_\Bs$ be an imprimitivity bimodule over the unital \cs-algebras $\As$ and $\Bs$. Let $\Is$ be an involutive ideal in the \cs-algebra 
$\As$. 
Then $M/(\Is M)$ is an imprimitivity bimodule over $\As/\Is$ and $\Bs/\phi_M(\Is)$. 
\end{proposition}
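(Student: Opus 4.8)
The plan is to reduce the whole statement to the already-established quotient construction for one-sided Hilbert $\cs$-modules (Proposition~\ref{prop: quot-mod}), by first proving the key identity
$\Is M = M\phi_M(\Is)$ of submodules of $M$. This identity is what allows the single quotient space $M/(\Is M)$ to be viewed simultaneously as a quotient of the full left Hilbert $\cs$-module ${}_\As M$ and of the full right Hilbert $\cs$-module $M_\Bs$.

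First I would observe that $\phi_M(\Is)$ is an involutive ideal of $\Bs$: since $\phi_M\colon\As\to\Bs$ is a $\cs$-algebra isomorphism by theorem~\ref{lemma: phiM}, it is isometric and $*$-preserving, so it sends involutive ideals to involutive ideals. Then, using the relation $a\cdot x=x\cdot\phi_M(a)$ furnished by theorem~\ref{lemma: phiM}, for $a\in\Is$ and $x\in M$ one has $a\cdot x=x\cdot\phi_M(a)\in M\phi_M(\Is)$, giving $\Is M\subseteq M\phi_M(\Is)$; conversely every element of $\phi_M(\Is)$ has the form $\phi_M(a)$ with $a\in\Is$, so $x\cdot\phi_M(a)=a\cdot x\in\Is M$ yields the reverse inclusion. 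Hence $\Is M=M\phi_M(\Is)$.

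With this identity secured, I would apply Proposition~\ref{prop: quot-mod} twice. Viewing ${}_\As M$ as a full left Hilbert $\cs$-module over $\As$ and $\Is$ as an involutive ideal, the quotient $M/(\Is M)$ is a full left Hilbert $\cs$-module over $\As/\Is$, with left action $(a+\Is)\cdot(x+\Is M):=ax+\Is M$ and inner product ${}_{\As/\Is}\ip{x+\Is M}{y+\Is M}:={}_\As\ip{x}{y}+\Is$. Viewing $M_\Bs$ as a full right Hilbert $\cs$-module over $\Bs$ and $\phi_M(\Is)$ as an involutive ideal, the quotient $M/(M\phi_M(\Is))$ is a full right Hilbert $\cs$-module over $\Bs/\phi_M(\Is)$, with the right action and $\Bs/\phi_M(\Is)$-valued inner product described there. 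Because $\Is M=M\phi_M(\Is)$, these two quotients are literally the same vector space $M/(\Is M)$, which therefore carries both structures at once; the bimodule compatibility $(a\cdot\bar x)\cdot b=a\cdot(\bar x\cdot b)$ and the two conditions linking the inner products to the involution descend directly from the corresponding identities in ${}_\As M_\Bs$, so $M/(\Is M)$ is a full Hilbert $\cs$-bimodule over $\As/\Is$ and $\Bs/\phi_M(\Is)$.

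It then remains only to transport the imprimitivity condition~\eqref{eq: imp} to the quotient. Writing $\bar x:=x+\Is M$, I would pass the identity ${}_\As\ip{x}{y}\,z=x\,\ip{y}{z}_\Bs$ through the quotient maps: its left side yields ${}_{\As/\Is}\ip{\bar x}{\bar y}\cdot\bar z={}_\As\ip{x}{y}z+\Is M$, while its right side yields $\bar x\cdot\ip{\bar y}{\bar z}_{\Bs/\phi_M(\Is)}=x\ip{y}{z}_\Bs+M\phi_M(\Is)$, and these coincide precisely because ${}_\As\ip{x}{y}z=x\ip{y}{z}_\Bs$ holds in $M$ and $\Is M=M\phi_M(\Is)$. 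I expect the only genuine obstacle to be this identity $\Is M=M\phi_M(\Is)$, since it is exactly what makes the left and right quotient constructions agree; once it is in place, the rest is a routine transport of structures and identities already available in ${}_\As M_\Bs$ through the quotient maps, together with the two invocations of Proposition~\ref{prop: quot-mod}.
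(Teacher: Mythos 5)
Your proposal is correct and follows essentially the same route as the paper's proof: establish that $\phi_M(\Is)$ is an involutive ideal, prove the key identity $\Is M=M\phi_M(\Is)$ from the relation $a\cdot x=x\cdot\phi_M(a)$ of theorem~\ref{lemma: phiM}, apply proposition~\ref{prop: quot-mod} on each side, and check by direct computation that the imprimitivity condition~\eqref{eq: imp} descends to the quotient. The only difference is cosmetic: you spell out both inclusions giving $\Is M=M\phi_M(\Is)$, while the paper simply cites the commutation property from theorem~\ref{lemma: phiM}.
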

\begin{proof}
Since $\phi_M:\As\to\Bs$ is an isomorphism of \cs-algebras, if $\Is$ is an involutive ideal in $\As$, also $\phi_M(\Is)\subset\Bs$ is an involutive ideal in $\Bs$. 
Note that property~\eqref{eq: axb} implies that $\Is M=M \phi_M(\Is)$ and so, by proposition~\ref{prop: quot-mod}, $M/(\Is M)=M/(M\phi_M(\Is))$ is a full left Hilbert \cs-module over $\As/\Is$ and a full right Hilbert \cs-module over $\Bs/\phi_M(\Is)$. 
Finally, by direct computation, we have: 
\begin{align*}
{}_{\As/\Is}\ip{x+\Is M}{y+\Is M}(x+\Is M)
&=({}_\As\ip{x}{y}+\Is)(z+\Is M) \\
&={}_\As\ip{x}{y} z + \Is M \\ 
&=x\ip{y}{z}_\Bs+\Is M \\
&=(x+\Is M) (\ip{y}{z}_\Bs+\Is) \\
&=(x+\Is M) \ip{y+ \Is M}{z +\Is M}_{\Bs/\phi_M(\Is)}. 
\end{align*}
\end{proof}

\subsection{Imprimitivity Bimodules in Commutative C*-categories.}\label{sec: imp-bim-cat}

Following P.~Ghez-R.~Lima-J.~Roberts~\cite{GLR} and P.~Mitchener~\cite{M1} we recall the following basic definition. 
\begin{definition}
A \emph{C*-category} is a category $\Cf$ such that: for all $A, B \in \Ob_\Cf$,
the sets $\Cf_{AB}:=\Hom_\Cf(B,A)$ are complex Banach spaces; the compositions are bilinear maps such that $\|xy\|\leq\|x\|\cdot\|y\|$  $\forall x\in \Cf_{AB} \ \forall y\in \Cf_{BC}$; there is an involutive antilinear contravariant functor $*:\Hom_\Cf\to\Hom_\Cf$, acting identically on the objects, such that $\|x^*x\|=\|x\|^2 \ \forall x\in \Cf_{BA}$ and such that $x^*x$ is a positive element in the C*-algebra $\Cf_{AA}$, for every $x\in \Cf_{BA}$ (i.e.~$x^* x = y^* y$ for some $y \in \Cf_{AA}$).
\end{definition}
Every C*-algebra can be seen as a C*-category with only one object. 

\medskip

In a C*-category $\Cf$, the ``diagonal blocks'' $\Cf_{AA}$ are unital C*-algebras and the ``off-diagonal blocks'' $\Cf_{AB}$ are unital Hilbert 
C*-bimodules on the C*-algebras $\As := \Cf_{AA}$ and $\Bs := \Cf_{BB}$. 
For short, we often write ${}_\As\Cf_\Bs:={}_{\Cf_{AA}}{\Cf_{AB}}_{\Cf_{BB}}$ when we want to consider $\Cf_{AB}$ as a bimodule. 

\smallskip 

We say that $\Cf$ is \emph{full} if all the bimodules $\Cf_{AB}$ are imprimitivity bimodules. 
Clearly~\cite[Remark~7.10]{GLR} in a full \cs-category, for all $A,B\in \Ob_\Cf$, $\As:=\Cf_{AA}$ and $\Bs:=\Cf_{BB}$ are always Morita-Rieffel equivalent \cs-algebras with the imprimitivity bimodule ${}_\As\Cf_\Bs$ as an equivalence bimodule. 

\medskip

\begin{lemma}\label{lemma: full}
A \cs-category $\Cf$ is full if and only if it satisfies the following property
\begin{equation}\label{eq: total-full}
\cj{\Cf_{AB}\circ \Cf_{BC}}= \Cf_{AC}, \quad \forall A,B,C \in \Ob_\Cf.
\end{equation}
\end{lemma}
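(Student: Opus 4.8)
The plan is to work directly with the concrete inner products carried by the off-diagonal blocks. For $x,y\in\Cf_{AB}$ the natural $\Cf_{BB}$-valued and $\Cf_{AA}$-valued inner products are $\ip{x}{y}_\Bs=x^*y$ and ${}_\As\ip{x}{y}=xy^*$, and with these the imprimitivity identity \eqref{eq: imp} is automatic from associativity of composition, since ${}_\As\ip{x}{y}\cdot z=(xy^*)z=x(y^*z)=x\cdot\ip{y}{z}_\Bs$. Since the blocks are already Hilbert \cs-bimodules (as recorded before the statement), the only obstruction to $\Cf_{AB}$ being an imprimitivity bimodule is fullness. So I would first record the translation: right fullness of $\Cf_{AB}$ over $\Cf_{BB}$ says $\overline{\spa\{x^*y:x,y\in\Cf_{AB}\}}=\Cf_{BB}$, i.e. $\overline{\Cf_{BA}\circ\Cf_{AB}}=\Cf_{BB}$, while left fullness over $\Cf_{AA}$ says $\overline{\Cf_{AB}\circ\Cf_{BA}}=\Cf_{AA}$. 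These are precisely \eqref{eq: total-full} for the index patterns $(B,A,B)$ and $(A,B,A)$.

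One direction is then immediate: if \eqref{eq: total-full} holds for all triples, it holds in particular for those two patterns, so every $\Cf_{AB}$ is full on both sides and hence an imprimitivity bimodule; thus $\Cf$ is full.

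For the converse I assume $\Cf$ full and derive \eqref{eq: total-full} for an arbitrary triple $(A,B,C)$. The inclusion $\overline{\Cf_{AB}\circ\Cf_{BC}}\subseteq\Cf_{AC}$ is automatic, because composition sends $\Cf_{AB}\times\Cf_{BC}$ into the Banach space $\Cf_{AC}$. For the reverse inclusion the key step is to factor the identity through $B$: fullness of $\Cf_{AB}$ as a left $\Cf_{AA}$-module, combined with Lemma~\ref{lem: part} in its left-module form, lets me write $1_A=\sum_j x_j y_j^*$ as a finite sum with $x_j,y_j\in\Cf_{AB}$. Then for any $w\in\Cf_{AC}$,
\[
w=1_A\,w=\sum_j x_j\,(y_j^*w),
\]
and since $x_j\in\Cf_{AB}$ and $y_j^*w\in\Cf_{BC}$, this exhibits $w$ as an element of $\spa(\Cf_{AB}\circ\Cf_{BC})$, giving $\Cf_{AC}\subseteq\overline{\Cf_{AB}\circ\Cf_{BC}}$.

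The step I expect to be the crux is this final factorization. Fullness only compares a single block with its two diagonal algebras, that is, two objects at a time, whereas \eqref{eq: total-full} couples three objects $A,B,C$; the apparent strengthening is what must be explained. The resolution is that composing an expansion of the identity $1_A$ drawn from the single block $\Cf_{AB}$ against an arbitrary $w\in\Cf_{AC}$ already produces the desired factorization through $B$, so associativity together with the identity-expansion does all the work and no fullness input beyond the two-object conditions is required. If one prefers to avoid invoking the finite-sum form of Lemma~\ref{lem: part}, the same conclusion follows by approximating $1_A$ in norm and using the submultiplicativity $\|fg\|\le\|f\|\,\|g\|$ of the \cs-category composition.
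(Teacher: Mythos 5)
Your proof is correct, and its engine is the same as the paper's: the nontrivial direction is obtained by expanding the identity $1_A$ through the object $B$ via fullness of the block $\Cf_{AB}$, then composing against an arbitrary $w\in\Cf_{AC}$. The difference lies in the tool used for that expansion. The paper works entirely with closed spans: from fullness it writes $\Cf_{AA}=\cj{\Cf_{AB}\circ\Cf_{BA}}$, and then, using $\Cf_{AC}=\Cf_{AA}\circ\Cf_{AC}$ and continuity of composition, runs the chain $\Cf_{AC}=\cj{\Cf_{AB}\circ\Cf_{BA}}\circ\Cf_{AC}\subset\cj{\Cf_{AB}\circ\Cf_{BA}\circ\Cf_{AC}}\subset\cj{\Cf_{AB}\circ\Cf_{BC}}\subset\Cf_{AC}$ --- exactly the approximation variant you sketch in your closing sentence. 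You instead invoke Lemma~\ref{lem: part} (in its left-module form, which holds by the symmetric argument, or equivalently by applying the stated right-module form to $\Cf_{BA}$ viewed as a right module over $\Cf_{AA}$) to obtain an exact finite expansion $1_A=\sum_j x_jy_j^*$ with $x_j,y_j\in\Cf_{AB}$, whence $w=\sum_j x_j(y_j^*w)$ with $y_j^*w\in\Cf_{BC}$. Your route proves slightly more than the lemma states: in a \cs-category, whose diagonal algebras are unital, one gets $\Cf_{AC}=\spa(\Cf_{AB}\circ\Cf_{BC})$ with no closure at all, the categorical analogue of the unital refinement recorded in Lemma~\ref{lem: part}; what the paper's route buys is independence from that refinement, since the closure-and-continuity argument would survive in settings where an exact finite partition of $1_A$ through $B$ is unavailable. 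Your preliminary reductions are also sound: with $\ip{x}{y}_\Bs=x^*y$ and ${}_\As\ip{x}{y}=xy^*$ (your convention, linear in the first variable, is the one consistent with the paper's definition of left Hilbert modules and with~\eqref{eq: imp}), the imprimitivity identity is just associativity, and~\eqref{eq: total-full} specialized to the index patterns $(B,A,B)$ and $(A,B,A)$ is literally two-sided fullness of each block --- this is what the paper compresses into ``clearly property~\eqref{eq: total-full} is stronger than fullness.''
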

\begin{proof}
Clearly property~\eqref{eq: total-full} is stronger than fullness. 

The fullness of $\Cf$ tells us that $\Cf_{AA}=\cj{\Cf_{AB}\circ\Cf_{BA}}$. 
The continuity of composition implies 
$\cj{\Cf_{AB}\circ \Cf_{BA}}\circ \Cf_{AC}\subset \cj{\Cf_{AB}\circ \Cf_{BA}\circ\Cf_{AC}}$. 
From the following computation 
\begin{align*}
\Cf_{AC}&= 
\Cf_{AA}\circ\Cf_{AC}= 
\cj{\Cf_{AB}\circ \Cf_{BA}}\circ \Cf_{AC} \\ 
&\subset\cj{\Cf_{AB}\circ \Cf_{BA}\circ \Cf_{AC}} \subset 
\cj{\Cf_{AB}\circ \Cf_{BC}}\subset \cj{\Cf_{AC}}=\Cf_{AC}
\end{align*}
we obtain $\Cf_{AC}=\cj{\Cf_{AB}\circ\Cf_{BC}}$.
\end{proof}

We use the previous lemma to show that in a full \cs-category the maps
\begin{equation*}
{}_\As\Cf_\Bs \otimes {}_\Bs\Cf_\Cs \to {}_\As\Cf_\Cs, \quad \text{given by} \quad 
x \otimes y \mapsto x\circ y
\end{equation*}
are isomorphisms of $\As$-$\Cs$-bimodules, 
for all $A,B,C\in \Ob_\Cf$.

\begin{proposition}
If $\Cf$ is a full \cs-category, for all $A,B,C\in \Ob_\Cf$, 
$({}_\As\Cf_\Cs, \circ)$ is a Rieffel interior tensor product for the pair of bimodules ${}_\As\Cf_\Bs$ and ${}_\Bs \Cf_\Cs$.
\end{proposition}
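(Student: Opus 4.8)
The plan is to exhibit the composition map as the Rieffel interior tensor product by checking the defining features — $\Bs$-balancing, compatibility of the inner products, density of the range — together with left $\As$-linearity, and then to pass to completions.

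First I would make explicit the concrete Hilbert-bimodule structure carried by the off-diagonal blocks. For $x,y\in\Cf_{AB}$ the right $\Bs$-valued inner product is $\ip{x}{y}_\Bs=x^*\circ y\in\Cf_{BB}=\Bs$, the left $\As$-action is $a\cdot x=a\circ x$, and analogously for $\Cf_{BC}$ and $\Cf_{AC}$. The composition map $\mu\colon\Cf_{AB}\times\Cf_{BC}\to\Cf_{AC}$, $(x,y)\mapsto x\circ y$, is $\CC$-bilinear, and by associativity of composition it is $\Bs$-balanced: $(x\cdot b)\circ y=(x\circ b)\circ y=x\circ(b\circ y)=x\circ(b\cdot y)$ for all $b\in\Bs$. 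Hence $\mu$ factors through the algebraic balanced tensor product, inducing a linear map $\bar\mu\colon\Cf_{AB}\otimes_\Bs\Cf_{BC}\to\Cf_{AC}$, $x\otimes y\mapsto x\circ y$.

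Next I would verify that $\bar\mu$ carries the Rieffel inner product to the $\Cs$-valued inner product of $\Cf_{AC}$. Using associativity of composition and the $*$-functor,
\[
\ip{x_1\circ y_1}{x_2\circ y_2}_\Cs=(x_1\circ y_1)^*\circ(x_2\circ y_2)=y_1^*\circ(x_1^*\circ x_2)\circ y_2=y_1^*\circ(\ip{x_1}{x_2}_\Bs\cdot y_2)=\ip{y_1}{\ip{x_1}{x_2}_\Bs\cdot y_2}_\Cs,
\]
which is exactly the formula defining the inner product on the Rieffel interior tensor product. Thus $\bar\mu$ is inner-product preserving; in particular it annihilates the null space of the (a priori only semi-definite) Rieffel inner product and descends to an isometry on the quotient pre-Hilbert module. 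Moreover $\bar\mu$ is left $\As$-linear, since $a(x\otimes y)=(ax)\otimes y\mapsto(a\circ x)\circ y=a\circ(x\circ y)=a\cdot\bar\mu(x\otimes y)$.

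Finally I would invoke Lemma~\ref{lemma: full}: fullness of $\Cf$ gives $\cj{\Cf_{AB}\circ\Cf_{BC}}=\Cf_{AC}$, so the range of $\bar\mu$ is dense in $\Cf_{AC}$. Since $\Cf_{AC}$ is already complete as a Hilbert $\Cs$-module, the isometric map $\bar\mu$ extends from the completion of the quotient onto $\Cf_{AC}$ as a surjective isometry, hence an isomorphism of Hilbert $\Cs$-modules intertwining the left $\As$-actions — precisely the assertion that $({}_\As\Cf_\Cs,\circ)$ is a Rieffel interior tensor product of ${}_\As\Cf_\Bs$ and ${}_\Bs\Cf_\Cs$. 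The only step that requires genuine care is this last one: one must confirm that inner-product preservation indeed kills the null space and yields an isometry on the quotient, and then combine density (from Lemma~\ref{lemma: full}) with completeness of $\Cf_{AC}$ to obtain surjectivity. The bilinearity, the $\Bs$-balancing, and the inner-product identity are all immediate from the \cs-category axioms and present no difficulty.
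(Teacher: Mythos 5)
Your proof is correct and takes essentially the same route as the paper's: the identical inner-product computation shows the composition map is isometric on (sums of) simple tensors, and Lemma~\ref{lemma: full} supplies the dense range that, combined with isometry and completeness of $\Cf_{AC}$, yields surjectivity. The only difference is presentational: you build the map by hand through the algebraic $\Bs$-balanced tensor product and quotient out the null space, whereas the paper obtains the map $T$ in one stroke by citing the universal factorization property of the Rieffel interior tensor product.
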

\begin{proof}
We show that there exists an isomorphism 
$T: {}_\As\Cf_\Bs\otimes {}_\Bs\Cf_\Cs\to {}_\As \Cf_\Cs$ 
of Hilbert \hbox{\cs-bimodules} such that $T(x\otimes y)=x\circ y$ for all 
$x\in {}_\As\Cf_\Bs$ and for all $y\in {}_\Bs\Cf_\Cs$.

Consider the composition map $\circ : {}_\As\Cf_\Bs\times{}_\Bs\Cf_\Cs\to {}_\As\Cf_\Cs$ and note that it is a bilinear map of Hilbert \cs-bimodules and hence (by the universal factorization property for tensor products of Hilbert \cs-bimodules) there exists one and only one Hilbert \cs-bimodule morphism 
$T:{}_\As\Cf_\Bs\otimes {}_\Bs\Cf_\Cs\to {}_\As \Cf_\Cs$ such that $T(x\otimes y)=x\circ y$. 

Now we show that, under the fullness condition, the map $T$ is an isomorphism.  

First of all we note that $T$ is an isometric map on the dense sub-bimodule generated by simple tensors:
\begin{align*}
\ip{T(\sum_j x_j\otimes y_j)}{&T(\sum_k x_k\otimes y_k)}_\Cs 
=\sum_{j,k}\ip{x_j\circ y_j}{x_k\circ y_k}_\Cs  \\
&=\sum_{j,k} (x_j\circ y_j)^*\circ (x_k\circ y_k) =
	\sum_{j,k} y_j^*\circ x_j^*\circ x_k\circ y_k \\
&=\sum_{j,k} \ip{y_j}{\ip{x_j}{x_k}_\Bs y_k}_\Cs = 
\sum_{j,k}\ip{x_j\otimes y_j}{x_k\otimes y_k}_\Cs \\
&=\ip{\sum_j x_j\otimes y_j}{\sum_k x_k\otimes y_k}_\Cs. 
\end{align*}
By continuity $T$ extends to an isometry on all of ${}_\As\Cf_\Bs\otimes {}_\Bs\Cf_\Cs$. 
Finally $T$ is surjective because it is an isometry that, from lemma~\ref{lemma: full}, has a dense image in ${}_\As \Cf_\Cs$. 
\end{proof}

Apart from a strictly associative (tensor) product (with partial identities given by ${}_\As\Cf_\As$), the family of imprimitivity bimodules of a full 
\cs-category $\Cf$ is naturally equipped with a strictly antimultiplicative notion of involution given by Rieffel duality (see definition~\ref{def: ri-dual}). 

\begin{proposition}
If $\Cf$ is a full \cs-category, $({}_\Bs\Cf_\As,*)$ is a Rieffel dual of the bimodule ${}_\As\Cf_\Bs$, for all $A,B\in \Ob_\Cf$.
\end{proposition}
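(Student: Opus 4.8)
The plan is to verify directly that $({}_\Bs\Cf_\As,*)$ satisfies the universal property of the Rieffel dual stated in proposition~\ref{def: ri-dual} (read with $\As$ and $\Bs$ interchanged, since our starting bimodule is ${}_\As\Cf_\Bs$), taking as the structural anti-homomorphism $\iota$ the involution $*$ of the \cs-category restricted to $\Cf_{AB}$. Recall that ${}_\As\Cf_\Bs=\Cf_{AB}$ is a bimodule whose left $\As=\Cf_{AA}$-action and right $\Bs=\Cf_{BB}$-action are given by composition, and likewise ${}_\Bs\Cf_\As=\Cf_{BA}$; the canonical inner products are $\ip{x}{y}_\Bs=x^*\circ y$ and ${}_\As\ip{x}{y}=x\circ y^*$, which indeed make ${}_\Bs\Cf_\As$ a (pre-)Hilbert \cs-bimodule. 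Since $*$ is an involutive contravariant functor acting identically on objects, its restriction $*\colon\Cf_{AB}\to\Cf_{BA}$ is a bijection, with inverse again given by $*$.

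First I would confirm that $*\colon{}_\As\Cf_\Bs\to{}_\Bs\Cf_\As$ is an anti-homomorphism of bimodules. Since all actions are compositions, the contravariance of $*$ yields $(a\circ x\circ b)^*=b^*\circ x^*\circ a^*$ for $a\in\As$, $b\in\Bs$, $x\in\Cf_{AB}$; interpreting the right-hand side in ${}_\Bs\Cf_\As$ (where $b^*\in\Bs$ acts on the left and $a^*\in\As$ on the right, again by composition) this is precisely the required identity $\iota(a\,x\,b)=b^*\,\iota(x)\,a^*$. Note that this relation already forces $*$ to be conjugate-linear over $\CC$, which is exactly what the candidate dual needs, so no inconsistency arises from the antilinearity of $*$.

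Next I would establish the universal factorization. Given any (pre-)Hilbert \cs-bimodule ${}_\Bs N_\As$ and any anti-homomorphism of bimodules $\Phi\colon{}_\As\Cf_\Bs\to{}_\Bs N_\As$, I would define $\Phi'\colon{}_\Bs\Cf_\As\to{}_\Bs N_\As$ by $\Phi'(\xi):=\Phi(\xi^*)$, which makes sense since $\xi^*\in\Cf_{AB}$ for $\xi\in\Cf_{BA}$. The identity $\Phi=\Phi'\circ *$ is immediate from $(x^*)^*=x$, and uniqueness follows at once from the surjectivity of $*$, since any $\Psi'$ with $\Phi=\Psi'\circ *$ must satisfy $\Psi'(\xi)=\Psi'((\xi^*)^*)=\Phi(\xi^*)=\Phi'(\xi)$. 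That $\Phi'$ is a genuine (covariant) homomorphism of bimodules is the crux: being the composite of the two anti-homomorphisms $\Phi$ and $*$, it reverses the module actions twice, e.g.\ $\Phi'(b\cdot\xi\cdot a)=\Phi\big((b\cdot\xi\cdot a)^*\big)=\Phi(a^*\cdot\xi^*\cdot b^*)=b\cdot\Phi(\xi^*)\cdot a=b\cdot\Phi'(\xi)\cdot a$, the middle equality using the anti-homomorphism property of $\Phi$.

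The computations here are entirely routine, so I expect the main obstacle to be purely one of bookkeeping: keeping straight the swapped left/right module structures of $\Cf_{AB}$ and $\Cf_{BA}$ together with the anti-multiplicativity and conjugate-linearity of $*$, which is exactly the mechanism that turns the anti-homomorphism $\Phi$ into the covariant homomorphism $\Phi'$. One may finally remark that any further Hilbert-module compatibility between ${}_\Bs\Cf_\As$ and the abstract dual built in proposition~\ref{def: ri-dual} is automatically accounted for by the uniqueness of Rieffel duals up to isomorphism, and so need not be checked separately.
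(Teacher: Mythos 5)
Your proposal is correct and follows essentially the same route as the paper: take the categorical involution $*$ as the structural anti-homomorphism and factor any anti-homomorphism $\Phi$ as $\Phi=(\Phi\circ *)\circ *$, with uniqueness coming from the bijectivity of $*$. The paper's proof is merely terser, leaving implicit the routine verifications (anti-homomorphism property of $*$, covariance of $\Phi\circ *$) that you spell out.
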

\begin{proof} 
Note that the map $*:{}_\As\Cf_\Bs\to{}_\Bs\Cf_\As$ is conjugate-linear, it is an anti-isomorphism of Hilbert \cs-bimodules\footnote{Recall that by an anti-homomorphism $\Phi:{}_\As M_\Bs\to {}_\Bs M_\As$ between unital Hilbert \cs-bimodules $M,N$, we mean a conjugate-linear map that satisfies $\Phi(axb)=b^*\Phi(x)a^*$ for all $x\in M$, $a\in \As$, $b\in \Bs$.} and it is isometric. We need to prove that $({}_\Bs\Cf_\As,*)$ satisfies the universal factorization property for conjugate-linear anti-homomorphisms of bimodules. 

Clearly every conjugate-linear map $\Phi:{}_\As\Cf_\Bs\to {}_\Bs M_\As$, with values in a Hilbert \cs-bimodule ${}_\Bs M_\As$, such that 
$\Phi(axb)=b^*\Phi(x)a^*$ for all 
$x\in M$, $a\in \As$, $b\in \Bs$, factorizes as $\Phi=(\Phi\circ *)\circ *$ via a unique morphism $\Phi\circ *: {}_\Bs\Cf_\As\to {}_\Bs M_\As$ of 
$\Bs$-$\As$-bimodules. 
\end{proof}

Every full \cs-category $\Cf$ determines a subgroupoid, actually a total equivalence relation, in the (algebraic) Picard-Rieffel groupoid, with objects given by the diagonal \cs-algebras $\Cf_{AA}$, for all $A\in \Ob_\Cf$, and morphisms given by the equivalence classes, under isomorphism of bimodules, of ${}_\As\Cf_{\Bs}$. 
Such an association is functorial as specified by the following result, whose proof is now elementary. 
\begin{theorem}\label{th: pic}
Let $\Cf$ be a full \cs-category. Denote by $[\Cf_{AB}]$ the equivalence class of Hilbert \cs-bimodules that are isomorphic to the imprimitivity bimodule ${}_\As\Cf_\Bs$. 
Consider $[\Cf_{AB}]$, for all $A,B\in\Ob_\Cf$, as arrows in the (algebraic) Picard-Rieffel groupoid.  
The family 
\begin{equation*}
\Pic_\Cf:=\{[\Cf_{AB}] \ | \ A,B\in \Ob_\Cf\}, 
\end{equation*}
is a total equivalence relation (i.e.~a subgroupoid with one and only one arrow for every pair of objects) contained in the algebraic Picard-Rieffel groupoid. 

A $*$-functor\footnote{A $*$-functor $\Phi:\Cf\to\Df$ between C*-categories is just a functor (linear on each block $\Cf_{AB}$, $A,B\in \Ob_\Cf$) such that $\Phi(x^*)=\Phi(x)^*$ for all $x\in \Hom_\Cf$. } 
$\Phi:\Cf\to\Df$ between full \cs-categories that is bijective on objects uniquely determines an isomorphism $\Pic(\Phi):\Pic_\Cf\to\Pic_\Df$ of equivalence relations given by:
\begin{equation*}
\Pic(\Phi): [\Cf_{AB}]\mapsto [\Df_{\Phi_A\Phi_B}], \quad \forall A,B\in \Ob_\Cf,
\end{equation*}
where $\Phi:A\mapsto\Phi_A\in\Ob_\Df$ denotes the bijective action of the functor on the objects of $\Cf$. 
The map $\Pic$ is a functor from the category of object-bijective $*$-functors between small full \cs-categories into the category of (object bijective) groupoid homomorphisms between total equivalence relations contained in the algebraic Picard-Rieffel groupoid. 
\end{theorem}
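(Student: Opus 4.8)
The real content of this statement has already been established: the first of the two propositions preceding this theorem shows that the composition map $\circ$ realises $({}_\As\Cf_\Cs,\circ)$ as the Rieffel interior tensor product of ${}_\As\Cf_\Bs$ and ${}_\Bs\Cf_\Cs$, and the second shows that $({}_\Bs\Cf_\As,*)$ is the Rieffel dual of ${}_\As\Cf_\Bs$. What remains is therefore only the routine verification of the groupoid and functor axioms, which is why the proof is elementary. First I would check that $\Pic_\Cf$ is a subgroupoid of the algebraic Picard-Rieffel groupoid of Definition~\ref{def: mr-pr}. Since $\Cf$ is full, each $\Cf_{AB}$ is an imprimitivity bimodule, so every $[\Cf_{AB}]$ is a genuine (invertible) arrow of that groupoid. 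The composition law $[\Cf_{AB}]\circ[\Cf_{BC}]=[\Cf_{AC}]$ is immediate from the tensor-product proposition; the identity at the object $\As=\Cf_{AA}$ is $[\Cf_{AA}]$, the diagonal block $\Cf_{AA}=\As$ being the standard identity bimodule; and inverses are supplied by the Rieffel dual proposition as $[\Cf_{AB}]^{-1}=[\Cf_{BA}]$ or, more directly, by noting that $[\Cf_{AB}]\circ[\Cf_{BA}]=[\Cf_{AA}]$ and $[\Cf_{BA}]\circ[\Cf_{AB}]=[\Cf_{BB}]$ are the relevant identities. Identifying the objects of $\Pic_\Cf$ with $\Ob_\Cf$, there is exactly one arrow $[\Cf_{AB}]$ over each ordered pair $(A,B)$, so $\Pic_\Cf$ is a total equivalence relation.

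Next I would verify that an object-bijective $*$-functor $\Phi\colon\Cf\to\Df$ induces the claimed isomorphism. Because $\Phi$ is bijective on objects, $A\mapsto\Phi_A$ is a bijection $\Ob_\Cf\to\Ob_\Df$, and since $\Df$ is full each $\Df_{\Phi_A\Phi_B}$ is an imprimitivity bimodule, so $[\Df_{\Phi_A\Phi_B}]$ is the unique arrow of $\Pic_\Df$ over the pair $(\Phi_A,\Phi_B)$. Hence $\Pic(\Phi)\colon[\Cf_{AB}]\mapsto[\Df_{\Phi_A\Phi_B}]$ is well defined and, being induced by a bijection of object sets between two total equivalence relations, is automatically a bijection on arrows. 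It is a groupoid homomorphism: it carries the identity $[\Cf_{AA}]$ to the identity $[\Df_{\Phi_A\Phi_A}]$, and, using the composition law established in the first step in both $\Cf$ and $\Df$, $\Pic(\Phi)([\Cf_{AB}]\circ[\Cf_{BC}])=\Pic(\Phi)([\Cf_{AC}])=[\Df_{\Phi_A\Phi_C}]=[\Df_{\Phi_A\Phi_B}]\circ[\Df_{\Phi_B\Phi_C}]=\Pic(\Phi)([\Cf_{AB}])\circ\Pic(\Phi)([\Cf_{BC}])$. A bijective homomorphism of groupoids is an isomorphism. Note that no faithfulness or fullness of $\Phi$ is needed at this level.

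Finally I would check functoriality of $\Pic$. The assignment $\Cf\mapsto\Pic_\Cf$, $\Phi\mapsto\Pic(\Phi)$ preserves identities, since $(\id_\Cf)_A=A$ forces $\Pic(\id_\Cf)=\id_{\Pic_\Cf}$, and it preserves composition, since for object-bijective $*$-functors $\Phi\colon\Cf\to\Df$ and $\Psi\colon\Df\to\Es$ one has $(\Psi\circ\Phi)_A=\Psi_{\Phi_A}$, whence $\Pic(\Psi\circ\Phi)([\Cf_{AB}])=[\Es_{\Psi_{\Phi_A}\Psi_{\Phi_B}}]=\Pic(\Psi)([\Df_{\Phi_A\Phi_B}])=(\Pic(\Psi)\circ\Pic(\Phi))([\Cf_{AB}])$. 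I expect no genuine obstacle here: the substantive work was done by the tensor-product and dual propositions, and the only points demanding care are bookkeeping ones, namely fixing the source/target convention for arrows of the Picard-Rieffel groupoid so that $[\Cf_{AB}]\circ[\Cf_{BC}]=[\Cf_{AC}]$ is correctly oriented, and identifying the object set of $\Pic_\Cf$ with $\Ob_\Cf$ so that ``exactly one arrow per ordered pair'' holds literally even when distinct objects happen to share a diagonal \cs-algebra.
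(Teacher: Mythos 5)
Your proposal is correct and follows exactly the route the paper intends: the paper itself gives no written proof beyond declaring it ``elementary'' in view of the two preceding propositions (composition as Rieffel interior tensor product, involution as Rieffel dual), and your verification of the groupoid axioms, the induced isomorphism $\Pic(\Phi)$, and functoriality is precisely the bookkeeping those propositions reduce the theorem to. Your closing remark about identifying the objects of $\Pic_\Cf$ with $\Ob_\Cf$ (so that uniqueness of arrows holds even when distinct objects share a diagonal \cs-algebra) addresses a subtlety the paper glosses over, and is handled appropriately.
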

An important tool related to these considerations is the ``linking algebra'' 
$\left[
\begin{smallmatrix}
\As & {}_\As M_\Bs \\
{}_\Bs M^*_\As& \Bs
\end{smallmatrix}
\right]$
of an imprimitivity bimodule ${}_\As M_\Bs$ 
as defined in L.~Brown-P.~Green-M.~Rieffel~\cite{BGR}, that could be seen as the enveloping \cs-algebra (see~\cite{GLR}) of a \cs-category with two objects. 

Since by~\cite[Theorem~1.1]{BGR} two unital \cs-algebras $\As,\Bs$ are Morita equivalent if and only if there exists another unital \cs-algebra $\Cs$ and two projections $p,q\in \Cs$ such that:
\begin{gather*}
p+q=1, \quad 
p\Cs p\simeq \As, \quad q\Cs q \simeq \Bs, \quad 
\cj{\Cs p \Cs}=\Cs, \quad \cj{\Cs q \Cs}=\Cs, 
\end{gather*}
and in this case there is a natural \cs-category with two objects with linking algebra 
$\left[
\begin{smallmatrix}
p\Cs p & q \Cs p \\
p\Cs q & q \Cs q
\end{smallmatrix}
\right]$, 
it is likely that every full C*-category can be seen as a ``strictification'' of a total equivalence relation in the ``weak'' Picard-Rieffel groupoid and hence that the functor $\Pic$ in theorem~\ref{th: pic} is surjective on objects. We will return to these considerations elsewhere.  

\bigskip

Following now~\cite{BCL,BCL2}, we say that a C*-category $\Cf$ is \emph{commutative} if all its diagonal blocks $\Cf_{AA}$ are commutative 
C*-algebras.

\medskip

When an imprimitivity bimodule is actually the bimodule ${}_\As\Cf_\Bs$ of morphisms $\Hom_\Cf(B,A)$ in a full commutative C*-category $\Cf$, much more can be said about the properties of the canonical isomorphisms of theorem~\ref{lemma: phiM} 
\begin{equation}\label{eq: phiba}
\phi_{BA}:=\phi_{{}_\As\Cf_\Bs}: \As\to \Bs.
\end{equation}
\begin{proposition}\label{prop: 3prop}
Let $\Cf$ be a full commutative C*-category, the family of canonical isomorphisms $(A,B)\mapsto \phi_{BA}$ associated to the imprimitivity bimodules ${}_\As\Cf_\Bs$ satisfies the following compatibility conditions:
\begin{gather}
\phi_{AA}=\iota_\As, \quad \forall A\in \Ob_\Cf, \label{eq: cm1} \\
\phi_{BA}=\phi_{AB}^{-1}, \quad \forall A,B\in \Ob_\Cf, \label{eq: cm2} \\
\phi_{CB}\circ\phi_{BA}=\phi_{CA}, \quad \forall A,B,C\in \Ob_\Cf. \label{eq: cm3}
\end{gather}
\end{proposition}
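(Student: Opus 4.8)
The plan is to prove \eqref{eq: cm1} and \eqref{eq: cm3} directly and then read off \eqref{eq: cm2} as the special case $C=A$ of the cocycle identity. Throughout I would use the two facts supplied by theorem~\ref{lemma: phiM}: the defining relation $\phi_{BA}({}_\As\ip{x}{y})=\ip{y}{x}_\Bs$ and the intertwining relation $a\cdot x=x\cdot\phi_{BA}(a)$ for $x\in\Cf_{AB}$, $a\in\As$. I would also record that on an off-diagonal block the two inner products are realized by composition, ${}_\As\ip{x}{y}=x\circ y^*$ and $\ip{x}{y}_\Bs=x^*\circ y$, and that both module actions are given by $\circ$ itself.

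For \eqref{eq: cm1} I would specialize to the diagonal bimodule $\Cf_{AA}=\As$. For $x,y\in\As$ the defining relation reads $\phi_{AA}(x\circ y^*)=y^*\circ x$, i.e.\ $\phi_{AA}(xy^*)=y^*x$; since $\As$ is commutative $y^*x=xy^*$, so $\phi_{AA}$ fixes every element of the form ${}_\As\ip{x}{y}$. By fullness (lemma~\ref{lem: part}) these elements span $\As$, and $\phi_{AA}$ is linear, hence $\phi_{AA}=\iota_\As$.

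The heart of the argument is \eqref{eq: cm3}. Fixing $A,B,C$ and $a\in\As$, I would take arbitrary $x\in\Cf_{AB}$ and $y\in\Cf_{BC}$ and compute $a\circ(x\circ y)$ in two ways. Pushing $a$ rightwards first through $x$ (intertwining for $\phi_{BA}$) and then through $y$ (intertwining for $\phi_{CB}$), together with associativity of $\circ$, yields $(x\circ y)\circ\phi_{CB}(\phi_{BA}(a))$; applying the intertwining relation for $\phi_{CA}$ directly to $x\circ y\in\Cf_{AC}$ yields $(x\circ y)\circ\phi_{CA}(a)$. Hence $(x\circ y)\circ\big(\phi_{CB}(\phi_{BA}(a))-\phi_{CA}(a)\big)=0$ for all such $x,y$. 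By the strengthened fullness of lemma~\ref{lemma: full} the composites $x\circ y$ are total in $\Cf_{AC}$, so $c:=\phi_{CB}(\phi_{BA}(a))-\phi_{CA}(a)\in\Cs$ annihilates $\Cf_{AC}$ on the right; writing $1_\Cs=\sum_k\ip{w_k}{z_k}_\Cs$ by fullness of $\Cf_{AC}$ over $\Cs$ and using $\ip{w_k}{z_k}_\Cs c=\ip{w_k}{z_k\cdot c}_\Cs=0$ forces $c=0$, giving $\phi_{CB}\circ\phi_{BA}=\phi_{CA}$. Finally \eqref{eq: cm2} follows by setting $C=A$: $\phi_{AB}\circ\phi_{BA}=\phi_{AA}=\iota_\As$, whence $\phi_{BA}=\phi_{AB}^{-1}$.

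The only genuine subtlety — the step I expect to be the main obstacle — is the closing passage of \eqref{eq: cm3}: after killing $c$ against every simple composite, one must invoke both the totality from lemma~\ref{lemma: full} (to reach all of $\Cf_{AC}$) and the right-fullness of $\Cf_{AC}$ over $\Cs$ (to upgrade ``annihilates the module'' to ``equals $0$''). Everything else is a short manipulation resting on associativity of $\circ$, commutativity of the diagonal algebras, and the two properties of $\phi$ from theorem~\ref{lemma: phiM}.
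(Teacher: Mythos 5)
Your proof is correct, but it takes a genuinely different route from the paper's, most visibly for the cocycle identity \eqref{eq: cm3}. The paper works directly with the defining formula \eqref{eq: phi-def}: writing $\phi_{BA}(a)=\sum_j\ip{w_j}{az_j}_\Bs$ and $\phi_{CB}(b)=\sum_k\ip{x_k}{by_k}_\Cs$, it computes $\phi_{CB}(\phi_{BA}(a))=\sum_{j,k}(w_jx_k)^*a(z_jy_k)$ and then checks that the families $u_{j,k}:=w_jx_k$, $v_{j,k}:=z_jy_k$ again satisfy $\sum_{j,k}\ip{u_{j,k}}{v_{j,k}}_\Cs=1_\Cs$, so the right-hand side is literally the defining expression for $\phi_{CA}(a)$; the only input beyond the formula is the well-definedness (independence of the chosen families) from theorem~\ref{lemma: phiM}, and neither lemma~\ref{lemma: full} nor any density argument is used. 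You instead characterize each $\phi$ through the intertwining relation \eqref{eq: ax}, obtain the cocycle identity from associativity of $\circ$, and then supply your own uniqueness step (an element $c\in\Cs$ with $z\circ c=0$ for all $z\in\Cf_{AC}$ is zero); that step is sound as stated --- totality of composites via lemma~\ref{lemma: full}, then $c=1_\Cs c=\sum_k\ip{w_k}{z_k c}_\Cs=0$ via lemma~\ref{lem: part} --- and in effect proves that the intertwining relation alone determines $\phi_{BA}$, a fact the paper never isolates. What your approach buys is conceptual clarity: \eqref{eq: cm3} is exposed as nothing but associativity of composition; the price is the appeal to lemma~\ref{lemma: full} and a continuity/limit argument, whereas the paper's computation is purely finitary and, as a by-product, exhibits explicit families realizing $1_\Cs$ inside $\Cf_{AC}$. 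Your treatment of \eqref{eq: cm1} also differs mildly (you use \eqref{eq: isomorphism} plus commutativity plus fullness; the paper simply evaluates \eqref{eq: phi-def} and pulls $a$ out by commutativity), and \eqref{eq: cm2} is obtained identically in both. One caution on conventions: your argument for \eqref{eq: cm1} requires the left inner product to be ${}_\As\ip{x}{y}=x\circ y^*$, i.e.\ linear in the first variable, which is what the paper's axioms and the imprimitivity condition \eqref{eq: imp} force; the formula ${}_\As\ip{x}{y}=yx^*$ displayed in the paper's own proof of this proposition is inconsistent with that convention, and with it your step $\phi_{AA}({}_\As\ip{x}{y})={}_\As\ip{x}{y}$ would fail, so it is worth stating your convention explicitly, as you did.
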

\begin{proof}
First of all, we note again that, for imprimitivity bimodules ${}_\As\Cf_\Bs$ of morphisms in a commutative full \cs-category, there is an explicit description of the inner products:
\begin{equation*}
\ip{x}{y}_\Bs:=x^*y, \quad {}_\As\ip{x}{y}:=yx^* \quad \forall x,y\in {}_\As\Cf_\Bs. 
\end{equation*}

Property~\eqref{eq: cm1} follows immediately from
\begin{equation*}
\phi_{AA}(a)=\sum_j\ip{w_j}{az_j}_\As=\sum_jw_j^*az_j=a\sum_j\ip{w_j}{z_j}_\As=a \quad \forall a\in {}_\As\Cf_\As.
\end{equation*}

To prove property~\eqref{eq: cm3}, let $w_j,z_j$ be finite families of elements in ${}_\As\Cf_\Bs$ and $x_k,y_k$  finite families of elements in ${}_\Bs\Cf_\Cs$ such that $\sum_j\ip{w_j}{z_j}_\Bs=1_\Bs$ and $\sum_k\ip{x_k}{y_k}_\Cs=1_\Cs$. By the definition of the canonical isomorphism \eqref{eq: phi-def}, we have:
\begin{gather*}
\phi_{BA}(a):=\sum_j\ip{w_j}{az_j}_\Bs \quad \forall a\in \As, \\
\phi_{CB}(b):=\sum_k\ip{x_k}{by_k}_\Cs \quad \forall b\in \Bs.
\end{gather*}
By direct calculation we see that the composition is given by:
\begin{align*}
\phi_{CB}\circ\phi_{BA}(a) & = \sum_k \ip{x_k}{\sum_j\ip{w_j}{az_j}_\Bs \,y_k}_\Cs \\
& = \sum_k\sum_j x_k^*w_j^*az_jy_k = \sum_k\sum_j (w_jx_k)^*a(z_jy_k).
\end{align*}
We only need to prove that the expression above is of the form $\sum_h \ip{u_h}{av_h}_\Cs$ for finite families of elements $u_h,v_h\in {}_\As\Cf_\Cs$, indexed by $h$, such that $\sum_h\ip{u_h}{v_h}_\Cs=1_\Cs$.

Now, the families of elements $w_jx_k$ and $z_jy_k$ satisfy exactly this property 
\begin{align*}
\sum_k\sum_j\ip{w_jx_k}{z_jy_k}_\Cs & =\sum_k\sum_j x_k^*w_j^*z_jy_k =  \sum_k\ip{x_k}{\sum_j\ip{w_j}{z_j}_\Bs y_k}_\Cs \\ 
& = \sum_k\ip{x_k}{1_\Bs y_k}_\Cs = 1_\Cs 
\end{align*}
and so we can define $u_{j,k}:=w_jx_k\in {}_\As\Cf_\Cs$ and $v_{j,k}:=z_jy_k\in {}_\As\Cf_\Cs$. 

Property~\eqref{eq: cm2} follows by direct application of equations~\eqref{eq: cm1} and~\eqref{eq: cm3}.
\end{proof}

\begin{proposition} \label{prop: omega-phi}
Let $\omega:\Cf\to \CC$ be a $*$-functor (i.e.~a functor such that $\omega(x^*)=\cj{\omega(x)}$, for all $x\in \Cf$) defined on the full commutative \cs-category $\Cf$.  For every pair of objects $A,B\in \Ob_\Cf$, we have
\begin{equation*}
\omega(\phi_{BA}(a))=\omega(a), \quad \forall a\in \Cf_{AA}. 
\end{equation*}
\end{proposition}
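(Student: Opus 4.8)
The plan is to reduce everything to the explicit description of the canonical isomorphism recalled in the proof of Proposition~\ref{prop: 3prop} and then to exploit that $\omega$, being a $*$-functor into $\CC$ regarded as a one-object \cs-category, is linear on each block, multiplicative with respect to composition of arrows, and sends identity morphisms to $1\in\CC$. The only inputs about $\Cf$ that I need are the explicit inner products $\ip{x}{y}_\Bs=x^*y$ valid in a commutative full \cs-category and the formula~\eqref{eq: phi-def} for $\phi_{BA}$.

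First I would fix a representation of $\phi_{BA}$. Choosing finite families $w_j,z_j\in{}_\As\Cf_\Bs=\Cf_{AB}$ with $\sum_j w_j^*z_j=1_\Bs$, the definition~\eqref{eq: phi-def} reads $\phi_{BA}(a)=\sum_j w_j^*\,a\,z_j$ for every $a\in\Cf_{AA}$. Applying $\omega$ and using linearity on the block $\Cf_{BB}$ gives $\omega(\phi_{BA}(a))=\sum_j\omega(w_j^*\,a\,z_j)$. Next I would invoke functoriality: since $w_j^*\,a\,z_j$ is the composition $w_j^*\circ a\circ z_j$, multiplicativity yields $\omega(w_j^*\,a\,z_j)=\omega(w_j^*)\,\omega(a)\,\omega(z_j)$. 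Because the values of $\omega$ are complex numbers, the scalar $\omega(a)$ factors out, leaving $\omega(\phi_{BA}(a))=\omega(a)\sum_j\omega(w_j^*)\,\omega(z_j)$. Re-applying multiplicativity and then linearity in the reverse direction collapses the remaining sum into $\sum_j\omega(w_j^*z_j)=\omega\big(\sum_j w_j^*z_j\big)=\omega(1_\Bs)$. Finally, since $1_\Bs$ is the identity morphism $\id_B$ and $\omega$ is a functor, $\omega(1_\Bs)=\id_{\omega(B)}=1$, whence $\omega(\phi_{BA}(a))=\omega(a)$.

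There is no serious analytic obstacle here: the computation is finite and purely algebraic, so no completion or density argument is required. The only point demanding care is the use of functoriality of $\omega$ \emph{across the off-diagonal blocks} of $\Cf$, rather than merely on each diagonal \cs-algebra $\Cf_{AA}$: the composition $w_j^*\circ a\circ z_j$ runs through the morphism spaces $\Cf_{BA}$, $\Cf_{AA}$ and $\Cf_{AB}$, and it is precisely the compatibility of $\omega$ with composition of arrows between different objects that lets the scalars recombine into $\omega(1_\Bs)$. Preservation of identities by the functor then forces the normalisation $\omega(1_\Bs)=1$ and closes the argument. It is worth noting that the $*$-property of $\omega$ is not needed in this proof; only that it is a functor, linear on each block, suffices.
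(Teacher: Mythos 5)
Your proof is correct and follows essentially the same route as the paper's: fix finite families $w_j,z_j\in\Cf_{AB}$ with $\sum_j\ip{w_j}{z_j}_\Bs=1_\Bs$, write $\phi_{BA}(a)=\sum_j w_j^*az_j$ via~\eqref{eq: phi-def}, and use linearity and multiplicativity of $\omega$ to factor out $\omega(a)$ and collapse the rest to $\omega(1_\Bs)=1$. Your closing observations — that the identity-preservation of the functor supplies $\omega(1_\Bs)=1$, and that the $*$-property is never actually used — are accurate refinements of the paper's own (identical) computation.
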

\begin{proof}
Consider the imprimitivity bimodule ${}_\As\Cf_\Bs$ and the associated canonical isomorphism $\phi_{BA}:\Cf_{AA}\to\Cf_{BB}$. For every $a\in \Cf_{AA}$, for any given finite families $w_j,z_j\in \Cf_{AB}$ such that $\sum_j\ip{w_j}{z_j}_\Bs=1_\Bs$, we know that $\phi_{BA}(a)=\sum_j \ip{w_j}{az_j}_\Bs$.  
Since $\omega:\Cf\to\CC$ is a $*$-functor, for all $a\in \Cf_{AA}$, we have:
\begin{align*}
\omega(\phi_{BA}(a)):&=\omega(\sum_j\ip{w_j}{az_j}_\Bs) = \sum_j\omega(\ip{w_j}{az_j}_\Bs) \\
&= \sum_j\omega(w_j^*az_j)=\sum_j\omega(w_j^*)\omega(a)\omega(z_j) \\ 
&= \omega(a) \sum_j \omega(w_j^*)\omega(z_j)=\omega(a)\sum_j\omega(w_j^*z_j) \\
&= \omega(a)\omega (\sum_j \ip{w_j}{z_j}_\Bs) =\omega(a)\omega(1_\Bs)=\omega(a). 
\end{align*}
\end{proof}

\section{Spectral Theorem for Imprimitivity Bimodules} \label{sec: bimodule}

Let $X_A$ and $X_B$ be two compact Hausdorff spaces and let $R_{BA}: X_A\to X_B$ be a homeomorphism between them. To every complex 
bundle $(E,\pi,R_{BA})$, over the graph of the homeomorphism $R_{BA}\subset X_A\times X_B$, we can naturally associate the set 
$\Gamma(R_{BA};E)$ of continuous sections of the bundle $E$, that turns out to be a symmetric bimodule over the commutative \cs-algebra 
$C(R_{BA};\CC)$ of continuous functions on the compact Hausdorff space $R_{BA}$. 

Considering now the pair of homeomorphisms
\begin{gather*}
\pi_A: R_{BA}\to X_A, \quad \pi_A: (x,y)\mapsto x, \\
\pi_B: R_{BA}\to X_B,  \quad \pi_B: (x,y)\mapsto y, 
\end{gather*}
we see that the set $\Gamma(R_{BA};E)$ becomes naturally a left module over $C(X_A;\CC)$ and a right module over $C(X_B;\CC)$ with the following left and right actions $f\cdot\sigma:=(f\circ\pi_A)\cdot \sigma$ and 
$\sigma\cdot g:=\sigma\cdot (g\circ\pi_B)$ or, in a more explicit form, for all $(x,y)\in R_{BA}$, $f\in C(X_A)$, $g\in C(X_B)$ and $\sigma\in \Gamma(R_{BA};E)$:
\begin{gather*}
f\cdot \sigma (x,y) := f(x)\sigma(x,y)=(f\circ\pi_A)(x,y) \cdot \sigma(x,y), \\
\sigma\cdot g(x,y) := \sigma(x,y)g(y)=\sigma(x,y) \cdot (g\circ\pi_B)(x,y). 
\end{gather*}
In the terminology of definition~\ref{def: twist}, this is the bimodule ${}_{\pi_A^\bullet}\Gamma(R_{BA},E)_{\pi_B^\bullet}$ obtained by twisting the symmetric $C(R_{BA})$-bimodule $\Gamma(R_{BA},E)$ by the isomorphism 
\hbox{$\pi_A^\bullet:C(X_A)\to C(R_{BA})$} on the left and by the isomorphism $\pi_B^\bullet:C(X_B)\to C(R_{BA})$ on the right. 

We say that ${}_{\pi_A^\bullet}\Gamma(R_{BA};E)_{\pi_B^\bullet}$ is the \emph{$C(X_A)$-$C(X_B)$-bimodule associated to the bundle $(E,\pi,R_{BA})$ over the homeomorphism $R_{BA}:X_A\to X_B$}.
Note that if $(E,\pi,R_{BA})$ is a Hermitian bundle over the homeomorphism $R_{BA}:X_A\to X_B$, then the bimodule ${}_{C(R_{BA})}\Gamma(R_{BA};E)_{C(R_{BA})}$ is a full symmetric Hilbert \cs-bimodule over $C(R_{BA})$ and, as in remark~\ref{rem: twist}, 
the associated bimodule ${}_{\pi_A^\bullet}\Gamma(R_{BA};E)_{\pi_B^\bullet}$ has a natural structure as a full Hilbert \cs-bimodule with inner products given by: 
\begin{gather*} 
{}_{C(X_A)}\ip{\sigma}{\rho}:=(\pi_A^\bullet)^{-1}(\ip{\sigma}{\rho}_{C(R_{BA})}), 
\quad \forall \sigma,\rho\in \Gamma(R_{BA};E), \\ 
\ip{\sigma}{\rho}_{C(X_B)}:=(\pi_B^\bullet)^{-1}(\ip{\sigma}{\rho}_{C(R_{BA})}), 
\quad \forall \sigma,\rho\in \Gamma(R_{BA};E).  
\end{gather*}
Furthermore the associated bimodule ${}_{\pi_A^\bullet}\Gamma(R_{BA};E)_{\pi_B^\bullet}$ is an imprimitivity bimodule if and only if ${}_{C(R_{BA})}\Gamma(R_{BA};E)_{C(R_{BA})}$ is an imprimitivity bimodule and this, by Serre-Swan theorem (see e.g.~\cite[Section~2.1.2]{BCL} and references therein), happens if and only if $(E,\pi,R_{BA})$ is a Hermitian line bundle. 

In this section, making use of the results in section~\ref{sec: imp-bim}, we prove, in the case of imprimitivity bimodules, a converse to the previous construction i.e.~that (up to isomorphism of bimodules) every imprimitivity Hilbert \cs-bimodule ${}_\As \Ms_\Bs$ over unital commutative 
\cs-algebras $\As$ and $\Bs$ actually arises as the bimodule associated to a Hermitian line bundle over a homeomorphism between the compact Hausdorff spaces $\Sp(\As)$ and $\Sp(\Bs)$. 

\begin{theorem}\label{th: impri}
Given an imprimitivity \cs-bimodule ${}_\As \Ms_\Bs$ over two commutative unital \cs-algebras $\As,\Bs$, there exists a Hermitian line bundle $(E,\pi,R_{BA})$, over the graph of a homeomorphism  $R_{BA}: X_A\to X_B$ between the two compact Hausdorff spaces $X_A:=\Sp(\As)$, $X_B:=\Sp(\Bs)$, whose associated $C(X_A)$-$C(X_B)$-bimodule ${}_{\pi_A^\bullet}\Gamma(R_{BA};E)_{\pi_B^\bullet}$, when twisted on the left by the Gel'fand transform isomorphism $\Gg_\As:\As\to C(\Sp(\As))$ and on the right by the Gel'fand isomorphism $\Gg_\Bs:\Bs\to C(\Sp(\Bs))$, becomes a bimodule ${}_{\pi_A^\bullet\circ\Gg_\As}\Gamma(R_{BA};E)_{\pi_B^\bullet\circ\Gg_\Bs}$ that is isomorphic, as an $\As$-$\Bs$-bimodule, to the initial Hilbert  \cs-bimodule ${}_\As\Ms_\Bs$.
\end{theorem}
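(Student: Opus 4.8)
The plan is to reconstruct the spectral data $(R_{BA}, E)$ directly from the algebraic structure of ${}_\As\Ms_\Bs$, exploiting Theorem~\ref{lemma: phiM}. The key observation is that Theorem~\ref{lemma: phiM} already hands us the canonical $*$-isomorphism $\phi_\Ms:\As\to\Bs$. Under the Gel'fand transforms $\Gg_\As:\As\to C(X_A)$ and $\Gg_\Bs:\Bs\to C(X_B)$, this isomorphism dualizes to a homeomorphism $R_{BA}:X_A\to X_B$ between the spectra, characterized by $\Gg_\Bs(\phi_\Ms(a))=\Gg_\As(a)\circ R_{BA}^{-1}$ for all $a\in\As$. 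The graph of $R_{BA}$, namely $\{(x,R_{BA}(x)) \st x\in X_A\}\subset X_A\times X_B$, is a compact Hausdorff space homeomorphic to $X_A$ (via $\pi_A$) and to $X_B$ (via $\pi_B$), and the algebra $C(R_{BA})$ is identified with both $C(X_A)$ and $C(X_B)$ through the isomorphisms $\pi_A^\bullet$ and $\pi_B^\bullet$. This identifies $R_{BA}$ concretely and reduces the two-algebra problem to a single commutative algebra $C(R_{BA})$.

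\textbf{Reducing to the symmetric module.} First I would use Theorem~\ref{lemma: phiM} to symmetrize the bimodule. By the proposition following Theorem~\ref{lemma: phiM}, the right $\phi_\Ms$-twisted bimodule ${}_\As\Ms_{\phi_\Ms}$ is precisely the right symmetrized bimodule, so $\Ms$ becomes a symmetric Hilbert \cs-module over the single commutative algebra $C(R_{BA})$ once we transport structures along $\Gg_\As$ and $\pi_A^\bullet$ (and compatibly along $\Gg_\Bs$, $\pi_B^\bullet$ on the right, using $\phi_\Ms$ to reconcile the two sides). Concretely, the relation $a\cdot x=x\cdot\phi_\Ms(a)$ proved in Theorem~\ref{lemma: phiM} shows that the left $\As$-action and the right $\Bs$-action induce the \emph{same} $C(R_{BA})$-module structure. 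Thus the problem collapses to: a full symmetric imprimitivity Hilbert \cs-module over the commutative unital \cs-algebra $C(R_{BA})$ is the module of sections of a Hermitian line bundle over $R_{BA}$.

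\textbf{Invoking Serre--Swan / Takahashi.} The second main step is to produce the Hermitian line bundle $E$ over $R_{BA}$ from this symmetric module. Here I would invoke the Serre--Swan (or Takahashi) theorem, as already cited in the paragraph preceding Theorem~\ref{th: impri}: a full symmetric Hilbert \cs-module over a commutative unital \cs-algebra is isomorphic to $\Gamma(R_{BA};E)$ for a Hermitian vector bundle $E$, and the \emph{imprimitivity} condition~\eqref{eq: imp} forces $\Ks(\Ms_\Bs)\simeq\As\simeq C(R_{BA})$ via Proposition~\ref{prop: imp-a-b}; since the compact operators of the section module of a rank-$r$ bundle form $\Gamma(\End E)$, fibrewise of dimension $r^2$, the identification with the base algebra $C(R_{BA})$ (fibrewise dimension $1$) forces $r=1$, i.e.\ $E$ is a \emph{line} bundle. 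This rank-one conclusion is exactly the content noted earlier in the section as the condition for ${}_{C(R_{BA})}\Gamma(R_{BA};E)_{C(R_{BA})}$ to be an imprimitivity bimodule.

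\textbf{The main obstacle and assembly.} Finally I would untwist: by Remark~\ref{rem: twist} and Proposition~\ref{prop: twist}, the isomorphism of symmetric $C(R_{BA})$-modules $\Ms\simeq\Gamma(R_{BA};E)$ transports back along the twists $\pi_A^\bullet\circ\Gg_\As$ on the left and $\pi_B^\bullet\circ\Gg_\Bs$ on the right to an isomorphism of $\As$-$\Bs$-bimodules between ${}_\As\Ms_\Bs$ and ${}_{\pi_A^\bullet\circ\Gg_\As}\Gamma(R_{BA};E)_{\pi_B^\bullet\circ\Gg_\Bs}$, provided the twists are compatible --- which is guaranteed precisely because $R_{BA}$ was \emph{defined} to dualize $\phi_\Ms$, so that $\pi_B^\bullet\circ\Gg_\Bs\circ\phi_\Ms=\pi_A^\bullet\circ\Gg_\As$ as maps into $C(R_{BA})$. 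I expect the main obstacle to be this bookkeeping of the two independent twists: one must verify that the single section-module isomorphism is simultaneously left-$\As$-linear and right-$\Bs$-linear, and that the two inner products (left-$\As$-valued and right-$\Bs$-valued) are both correctly recovered. The compatibility hinges entirely on equation~\eqref{eq: isomorphism}, $\phi_\Ms({}_\As\ip{x}{y})=\ip{y}{x}_\Bs$, which is what ties the two Gel'fand pictures together through the common base $R_{BA}$; checking that this single relation suffices to make the bimodule isomorphism respect \emph{both} Hilbert-module structures is the crux of the argument.
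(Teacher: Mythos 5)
Your proposal is correct and follows essentially the same route as the paper's proof: extract $\phi_\Ms$ from Theorem~\ref{lemma: phiM}, dualize it via Gel'fand to the homeomorphism $R_{BA}$, use the relation $a\cdot x=x\cdot\phi_\Ms(a)$ to view $\Ms$ as a symmetric Hilbert \cs-module over $C(R_{BA})$, apply Serre--Swan to obtain the Hermitian bundle (with imprimitivity forcing rank one), and untwist via Proposition~\ref{prop: twist} using the compatibility $\pi_B^\bullet\circ\Gg_\Bs\circ\phi_\Ms=\pi_A^\bullet\circ\Gg_\As$. The only cosmetic difference is that the paper routes the symmetrization through an explicitly constructed graph \cs-algebra $\Rs=\{(a,\phi_\Ms(a))\}\subset\As\times\Bs$ with $\Sp(\Rs)\simeq R_{BA}$, whereas you identify everything directly with $C(R_{BA})$; your fibrewise compact-operator argument for rank one is a slightly more explicit version of the line-bundle criterion the paper simply cites.
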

\begin{proof}
By theorem~\ref{lemma: phiM}, we have a canonical isomorphism $\phi_\Ms:\As\to\Bs$. 
Using Gel'fand theorem, applied to the isomorphism $\phi_\Ms^{-1}:\Bs\to\As$ of unital \cs-algebras, we recover a homeomorphism $R_{BA}:=(\phi_M^{-1})^\bullet:X_A\to X_B$ between the two compact Hausdorff spaces $X_A:=\Sp(\As)$ and $X_B:=\Sp(\Bs)$. Furthermore we know that the Gel'fand transforms $\Gg_\As:\As\to C(X_A;\CC)$, $\Gg_\Bs:\Bs\to C(X_B;\CC)$ provide two isomorphisms of \cs-algebras. 

Consider now the set $\Rs\subset\As\times\Bs$ defined by
$\Rs:=\{(a,b)\in\As\times\Bs\ | \ b=\phi_\Ms(a) \}$  
and note that $\Rs$ has a natural structure of unital \cs-algebra with componentwise multiplication and norm defined by 
$\|(a,b)\|_\Rs:=\max\{\|a\|,\|b\|\}=\|a\|=\|b\|$. 
There are natural isomorphisms $\alpha:\Rs\to\As$ and $\beta:\Rs\to\Bs$ given by
\begin{equation*}
\alpha: (a,b)\mapsto a, \quad \beta:(a,b)\mapsto b, \quad \forall (a,b)\in \Rs,  
\end{equation*}
and they satisfy $\phi_\Ms=\beta\circ\alpha^{-1}$. 

Note also that the topological space $\Sp(\Rs)$ is canonically homeomorphic to $R_{BA}$. 
In fact, since $R_{BA}\circ(\alpha^{-1})^\bullet=(\phi^{-1}_\Ms)^\bullet\circ(\alpha^{-1})^\bullet =(\alpha\circ \beta^{-1})^\bullet\circ(\alpha^{-1})^\bullet=(\beta^{-1})^\bullet$, the function \hbox{$S:\omega\mapsto((\alpha^{-1})^\bullet(\omega),(\beta^{-1})^\bullet(\omega))$}, for $\omega\in \Sp(\Rs)$, takes values in $R_{BA}$ and being bijective continuous between compact Hausdorff spaces it is a homeomorphism.

We summarize the situation with the following commutative diagrams that might come helpful to visualize the several isomorphisms and homeomorphisms involved:
\begin{equation*}
\xymatrix{
\As \ar[d]_{\Gg_\As} & \Rs\ar[l]_\alpha \ar[r]^\beta \ar[d]_{\Gg_\Rs} & \Bs \ar[d]_{\Gg_\Bs} \\
C(X_A)  \ar[dr]_{\pi_A^\bullet} & \ar[l]_{\alpha^{\bullet\bullet}}C(\Sp(\Rs)) \ar[r]^{\beta^{\bullet\bullet}}& C(X_B) \ar[dl]^{\pi_B^\bullet} \\
 & C(R_{BA}) \ar[u]_{S^\bullet} & 
}\quad 
\xymatrix{
X_A \ar[dr]^{\alpha^\bullet}\ar[rr]^{R_{BA}}& & X_B \ar[dl]_{\beta^\bullet} \\
 & \Sp(\Rs)\ar[d]_S & \\
 & R_{BA} \ar[uul]^{\pi_A} \ar[uur]_{\pi_B} &  
}
\end{equation*}

Twisting (see definition~\ref{def: twist}) the bimodule ${}_\As\Ms_\Bs$ by $\alpha$ on the left and $\beta$ on the right, we obtain a Hilbert \cs-bimodule ${}_\alpha\Ms_\beta$ over $\Rs$ that is symmetric because
\begin{equation*}
(a,b)\cdot x = \alpha(a,b)x=ax=x\phi_\Ms(a)=x\beta(a,b)=x\cdot(a,b), \forall (a,b)\in \Rs. 
\end{equation*}
Twisting one more time ${}_\alpha\Ms_\beta$ with the isomorphism 
\begin{equation*}
\gamma:=\Gg_\Rs^{-1}\circ S^\bullet:C(R_{BA})\to \Rs, 
\end{equation*}
we get a symmetric Hilbert \cs-bimodule ${}_{\alpha\circ\gamma}\Ms_{\beta\circ\gamma}$ over the \cs-algebra $C(R_{BA})$. 
By a direct application of Serre-Swan theorem (see e.g.~\cite[Theorem~2.2]{BCL}), we see that there exists a Hermitian bundle $(E,\pi, R_{BA})$ over the compact Hausdorff space $R_{BA}$ such that there exists an isomorphism of $C(R_{BA})$-bimodules  $\Phi:{}_{\alpha\circ\gamma}\Ms_{\beta\circ\gamma}\to\Gamma(R_{BA};E)$. 
Since ${}_\As M_\Bs$ is an imprimitivity bimodule, so is  
${}_{\alpha\circ\gamma}\Ms_{\beta\circ\gamma}$ and hence $(E,\pi,R_{BA})$ is a Hermitian line bundle. 
Making use of proposition~\ref{prop: twist}, we have that the map $\Phi$ also becomes an isomorphism $\Phi:{}_\As M_\Bs \to {}_{(\alpha\circ\gamma)^{-1}}\Gamma(R_{BA};E)_{(\beta\circ\gamma)^{-1}}$ of Hilbert \cs-bimodules over $\As$ and $\Bs$. 
Since, by the diagram above, we have $(\alpha\circ\gamma)^{-1}=\pi_A^\bullet\circ \Gg_A$ and $(\beta\circ\gamma)^{-1}=\pi_B^\bullet\circ\Gg_B$, we finally obtain an isomorphism of left $\As$, right $\Bs$ Hilbert \cs-bimodules
\begin{equation*}
\Phi:{}_\As M_\Bs \to 
{}_{\pi_A^\bullet\circ \Gg_A}\Gamma(R_{BA};E)_{\pi_B^\bullet\circ\Gg_B}.
\end{equation*} 
\end{proof}

Note that the theorem says that for an imprimitivity bimodule ${}_\As M_\Bs$ over commutative unital \cs-algebras, the triple $(\Gg_\As,\Phi,\Gg_\Bs)$ provides an isomorphism, in the category of Hilbert \cs-bimodules, from the bimodule ${}_\As M_\Bs$ to the $C(X_A)$-$C(X_B)$-bimodule ${}_{\pi_A^\bullet}\Gamma(R_{BA};E)_{\pi_B^\bullet}$ associated to the Hermitian line bundle $(E,\pi,R_{BA})$ over the homeomorphism 
\hbox{$R_{BA}:X_A\to X_B$}. 
This means that $\Phi(axb)=\Gg_\As(a)\Phi(x)\Gg_\Bs(b)$, for all $x\in \Ms$, $a\in \As$ and $b\in \Bs$. 
The map $\Phi$ is essentially a ``canonical extension'' of the Gel'fand transform of the \cs-algebras $\As$ and $\Bs$ to the imprimitivity bimodule ${}_\As M_\Bs$  over them. 

\medskip

The above theorem is just the starting point for the development of a ``bivariant Serre-Swan equivalence'' and, more generally, a bivariant ``Takahashi duality'' (see e.g.~\cite[Section~2.1.2]{BCL} and references therein) for the category of  Hilbert \cs-bimodules over commutative \cs-algebras. This will be done elsewhere. 

Our spectral theorem, for imprimitivity bimodules over Abelian \cs-algebras, is dealing only with the representativity of a potential functor that, to every Hermitian line bundle $(E,\pi,R_{BA})$ over the graph of a homeomorphism $R_{BA}:X_A\to X_B$ between compact Hausdorff spaces, associates the imprimitivity bimodule ${}_{\pi_A^\bullet}\Gamma(R_{BA};E)_{\pi_B^\bullet}$ over the commutative \cs-algebras $C(X_A)$ and 
$C(X_B)$. To proceed further we have to provide a suitable notion of morphisms and define our functor on them. 

The above result is for now stated in the case of imprimitivity bimodules and hence it does not provide neither an answer to the problem of classifying, nor a geometric interpretation of general $C(X)$-$C(Y)$-bimodules for given compact Hausdorff spaces $X$ and $Y$. 
Warning the reader to take due care of some differences in notations and definitions, 
for some related results on the ``spectral theory'' of Hilbert \cs-bimodules,  
one may consult B.~Abadie-R.~Exel~\cite{AE}, H.~Bursztyn-S.~Waldmann~\cite{BW}, A.~Hopenwasser-J.~Peters-J.~Powers~\cite{HPP}, A.~Hopenwasser~\cite{H}, T.~Kajiwara-C.~Pinzari-Y.~Watatani~\cite{KPW}, P.~Muhly-B.Solel~\cite{MS}. 

In particular, B.~Abadie and R.~Exel~\cite[Proposition~1.9]{AE} proved that every imprimitivity \cs-bimodule over a commutative \cs-algebra $\As$ is always obtained from its right symmetrization by twisting on one side with a given automorphism $\theta$ and, in a more algebraic setting, a result of H.~Burzstyn-S.~Waldmann~\cite[Proposition~2.3]{BW} assures that if two imprimitivity bimodules ${}_\As M_\Bs$ and ${}_\As N_\Bs$ over the same commutative algebras are isomorphic as right modules, there is a unique isomorphism of the \cs-algebra $\Bs$  such that the bimodule $M$ is isomorphic to the twisting of $N$. 

Gathering together the above facts, in the special case of commutative full \cs-categories, we obtain the following result.
\begin{theorem}
Let $\Cf$ be a full commutative \cs-category.
Then for every pair of objects $A$ and $B$, one has:
\begin{itemize}
\item [-] ${}_\As\Cf_\Bs$ is an imprimitivity ${}_\As\Cf_\As$-${}_\Bs\Cf_\Bs$ bimodule.
That is, ${}_\As\Cf_\As$ and ${}_\Bs\Cf_\Bs$ are Morita equivalent and thus there is a canonical $*$-isomorphism implemented by $x^* y \mapsto y x^*$, $x,y \in {}_\As\Cf_\Bs$.
\item [-] ${}_\As\Cf_\Bs$ is the (non-symmetric) ${}_\As\Cf_\As$-${}_\Bs\Cf_\Bs$-bimodule of continuous sections of a Hermitian line bundle over the graph of the corresponding homeomorphism between the Gel'fand spectra of ${}_\As\Cf_\As$ and ${}_\Bs\Cf_\Bs$.
\end{itemize}
\end{theorem}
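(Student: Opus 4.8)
The plan is to read this statement as an immediate harvest of the machinery already assembled, applied to the single imprimitivity bimodule ${}_\As\Cf_\Bs$ with $\As:=\Cf_{AA}$ and $\Bs:=\Cf_{BB}$; since $\Cf$ is commutative, both diagonal blocks are commutative unital \cs-algebras, which is exactly the hypothesis that theorem~\ref{lemma: phiM} and theorem~\ref{th: impri} require. Consequently there is no genuinely new construction to carry out, and the work reduces to verification and to matching the explicit formula asserted in the first bullet.

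For the first bullet I would begin by noting that ${}_\As\Cf_\Bs$ is an imprimitivity ${}_\As\Cf_\As$-${}_\Bs\Cf_\Bs$ bimodule by the very definition of fullness of $\Cf$ (every off-diagonal block $\Cf_{AB}$ is an imprimitivity bimodule), so that $\As=\Cf_{AA}$ and $\Bs=\Cf_{BB}$ are Morita equivalent by the observation recorded just after that definition. The canonical $*$-isomorphism is then furnished by theorem~\ref{lemma: phiM}, which yields $\phi_{BA}:=\phi_{{}_\As\Cf_\Bs}\colon\As\to\Bs$ characterised by~\eqref{eq: isomorphism}. To make it explicit I would substitute the concrete inner products of a full commutative \cs-category, namely $\ip{x}{y}_\Bs=x^*y$ and ${}_\As\ip{x}{y}=yx^*$ (as recorded in the proof of proposition~\ref{prop: 3prop}), into the defining relation $\phi_{BA}({}_\As\ip{x}{y})=\ip{y}{x}_\Bs$, thereby exhibiting the isomorphism through the assignment $x^*y\mapsto yx^*$ on inner products. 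The compatibility relations~\eqref{eq: cm1}--\eqref{eq: cm3} of proposition~\ref{prop: 3prop} ensure that these isomorphisms cohere across all pairs of objects, so that speaking of ``the canonical isomorphism'' is legitimate.

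For the second bullet I would apply theorem~\ref{th: impri} verbatim to ${}_\As\Cf_\Bs$: it produces a Hermitian line bundle $(E,\pi,R_{BA})$ over the graph of the homeomorphism $R_{BA}:=(\phi_{BA}^{-1})^\bullet\colon\Sp(\As)\to\Sp(\Bs)$ together with an isomorphism of Hilbert \cs-bimodules identifying ${}_\As\Cf_\Bs$ with the twisted section bimodule ${}_{\pi_A^\bullet\circ\Gg_\As}\Gamma(R_{BA};E)_{\pi_B^\bullet\circ\Gg_\Bs}$. The homeomorphism occurring here is precisely the Gel'fand dual of the canonical isomorphism of the first bullet, which is what ``the corresponding homeomorphism'' denotes, and the section bimodule is non-symmetric exactly because $R_{BA}$ is in general non-trivial.

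I expect the only genuine point of care, rather than a real obstacle, to be the passage from $\phi_{BA}^{-1}(x^*y)=xy^*$ (which is what the defining relation gives directly) to the stated form $x^*y\mapsto yx^*$: these agree because of the identity $xy^*=yx^*$, valid in a full commutative \cs-category, which I would derive from the imprimitivity relation~\eqref{eq: imp} applied as ${}_\As\ip{y}{x}z=y\ip{x}{z}_\Bs$ together with the faithfulness of the left action (the injectivity of $T$ in proposition~\ref{prop: imp-a-b}), since this forces $(xy^*-yx^*)z=0_M$ for all $z$. Everything else is a direct citation of theorem~\ref{lemma: phiM} and theorem~\ref{th: impri}, so no new estimates or constructions are needed.
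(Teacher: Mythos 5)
Your skeleton --- fullness for imprimitivity, theorem~\ref{lemma: phiM} for the canonical isomorphism, theorem~\ref{th: impri} for the line bundle --- is exactly the paper's route, which offers no argument for this theorem beyond gathering those facts. However, your justification of the explicit formula $x^*y\mapsto yx^*$ contains a genuine error: the identity $xy^*=yx^*$ that you invoke is \emph{false} in a full commutative \cs-category. Consider the two-object \cs-category with all four blocks equal to $\CC$, composition given by multiplication and involution by conjugation (its linking algebra is $M_2(\CC)$): it is full and commutative, yet $x=1$, $y=i$ in $\Cf_{AB}$ give $xy^*=-i$ while $yx^*=i$. The root of the problem is that the formula ${}_\As\ip{x}{y}=yx^*$, which you quote from the proof of proposition~\ref{prop: 3prop}, is a typo in the paper: it is conjugate-linear in $x$, contradicting the declared $\As$-linearity in the first variable, and it is incompatible with the imprimitivity relation~\eqref{eq: imp}. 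The left inner product that actually makes $\Cf_{AB}$ an imprimitivity bimodule is ${}_\As\ip{x}{y}=xy^*$, as forced by ${}_\As\ip{x}{y}z=x\ip{y}{z}_\Bs=xy^*z$ together with faithfulness of the left action. Your derivation of $xy^*=yx^*$ mixes the typo formula with relation~\eqref{eq: imp}; since these two are mutually inconsistent, the argument ``proves'' a false identity, and with the correct formula the instance ${}_\As\ip{y}{x}z=y\ip{x}{z}_\Bs$ that you invoke becomes the tautology $yx^*z=yx^*z$ and yields nothing.

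The repair is short and makes your reconciliation step unnecessary: with ${}_\As\ip{x}{y}=xy^*$, equation~\eqref{eq: isomorphism} reads $\phi_{BA}(xy^*)=y^*x$, so that $\phi_{BA}^{-1}=\phi_{AB}$ is implemented by $x^*y\mapsto yx^*$ --- verbatim the formula in the statement, with no auxiliary identity needed. Everything else in your proposal (Morita equivalence from fullness, the coherence relations of proposition~\ref{prop: 3prop}, and the second bullet obtained by applying theorem~\ref{th: impri} to ${}_\As\Cf_\Bs$ with $R_{BA}=(\phi_{BA}^{-1})^\bullet$) is correct and is precisely what the paper intends.
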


\bigskip

\noindent
\emph{Aknowledgements.}

We acknowledge the support provided by the Thai Research Fund, grant n.~RSA4780022.  
The main part of this work has been done in the two-year visiting period of R.~Conti to the Department of Mathematics of Chulalongkorn University.

P.~Bertozzini thanks the Department of Mathematics in Chulalongkorn University for the kind weekly hospitality during the period of preparation of this paper. 

{\footnotesize

}

\end{document}